\renewcommand{\subsection}[1]{\vspace{3mm}\refstepcounter{subsection}\noindent{\bf \thesubsection. #1.} }
\renewcommand{\subsubsection}[1]{\vspace{3mm}\refstepcounter{subsubsection}\noindent{\bf \thesubsubsection. #1.} }
\numberwithin{equation}{section}
\newcommand{\Hom}{\operatorname{Hom}}
\renewcommand{\geq}{\geqslant}
\renewcommand{\leq}{\leqslant}
\newcommand{\Osh}{{\mathcal O}}                        
\newcommand{\image}{\operatorname{image}}
\renewcommand{\H}{\mathrm{H}}                          
\newcommand{\V}{\mathrm{V}}                            
\newcommand{\id}{\operatorname{id}}                    
\newcommand{\Pic}{\operatorname{Pic}} 
\newcommand{\lcm}{\operatorname{lcm}}
\newcommand{\supp}{\operatorname{supp}}
\newcommand{\idlim}{\varprojlim} 
\newcommand{\K}{\mathrm{K}}                            
\newcommand{\Span}{\operatorname{span}}                
\newcommand{\Aut}{\operatorname{Aut}} 
\newcommand{\x}{\mathbf{x}}  
\newcommand{\y}{\mathbf{y}} 
\newcommand{\z}{\mathbf{z}}  
\newcommand{\e}{\mathbf{e}} 
\newcommand{\VV}{\operatorname{V}} 
\newcommand{\TT}{\operatorname{T}} 
\newcommand{\tor}{\operatorname{tor}} 
\newcommand{\Res}{\operatorname{Res} } 
\newcommand{\Ind}{\operatorname{Ind} } 
\newcommand{\ii}{\operatorname{i}}
\newcommand{\G}{\mathrm{G}}
\newcommand{\NS}{\operatorname{NS}} 
\newcommand{\End}{\operatorname{End}} 
\renewcommand{\emptyset}{\varnothing}
\newcommand{\ZZ}{\mathbb{Z}} 
\newtheorem{theorem}{Theorem}[section]
\newtheorem{lemma}[theorem]{Lemma}
\newtheorem{corollary}[theorem]{Corollary}
\newtheorem{proposition}[theorem]{Proposition}
\theoremstyle{definition}
\begin{document}
\title{Refinements to Mumford's theta and adelic theta groups}

\author{Nathan Grieve}
\address{ Department of Mathematics and Statistics,
McGill University,
Montreal, QC, Canada}
\email{ngrieve@math.mcgill.ca}

\thanks{\emph{Mathematics Subject Classification (2010):}  Primary: 14K05, Secondary: 14J60.}
\thanks{As published in Annales math\'{e}matiques du Qu\'{e}bec, Vol. 38, No. 2, 145--167 (2014). }

\maketitle
\begin{abstract}
Let $X$ be an abelian variety defined over an algebraically closed field $k$.
We consider theta groups associated to \emph{simple semi-homogenous vector bundles of separable type} on $X$.  We determine the structure and representation theory of these groups.  In doing so we relate work of Mumford, Mukai, and Umemura.  We also consider adelic theta groups associated to line bundles on $X$.  After reviewing Mumford's construction of these groups we determine  functorial properties which they enjoy and then realize the Neron-Severi group of $X$ as a subgroup of the cohomology group $\H^2(\V(X), k^\times)$. 
\end{abstract}

\section{Introduction}

Let $X$ be an abelian variety defined over an algebraically closed field $k$.  Our purpose here is to refine and generalize the theory of theta-Heisenberg and adelic theta groups associated to line bundles on $X$.  Such groups were invented by Mumford and played a fundamental role in his study of syzygies of abelian varieties, moduli of abelian varieties, and theta functions, see \cite{MumI}, \cite{MumII}, \cite{MumIII}, and \cite{Mum:Quad:Eqns}.  

These groups continue to play an important role in the theory of abelian varieties especially in the study of syzygies, moduli, and vector bundles.  Some more recent examples include the work of Gross-Popescu, \cite[Example 2.10, p. 349]{G-P}, where theta groups and their higher weight representation theory play a role in their study of syzygies, work of  Nakamura, \cite{Nakamura:99}, where theta group schemes are used to compactify the moduli scheme of abelian schemes over $\operatorname{Spec} \ZZ[\zeta_N,1/N]$, and work of  Oprea, \cite[\S 2]{Oprea:2011}, where theta groups and their relation to semi-homogeneous vector bundles are used to study Verlinde bundles.  Even more recently Brion has considered theta groups associated to Brauer-Severi varieties over abelian varieties, \cite{Bri}, while Shin has extended Mumford's work by constructing theta and adelic theta groups associated to line bundles on abelian schemes, \cite{Shin}.

Here we make three contributions. Our first two concern theta groups associated to a class of simple vector bundles on $X$ which we refer to as \emph{simple semi-homogeneous vector bundles of separable type} and define in \S \ref{semi-homog:prelim}. Our first result, Theorem \ref{non-degen-vb}, concerns the structure of these groups. 
This theorem  generalizes \cite[\S 1, Theorem 1]{MumI} and answers a problem of Umemura, see \cite[p. 120]{Umemura:1973},  for this class of vector bundles on $X$. Our second result, 
Theorem \ref{thm:theta:vb:1} and Corollary \ref{unique:theta:irred}, determines the representation theory of these groups.   This theorem generalizes \cite[\S 1, Proposition 3 and Theorem 2]{MumI} and also makes the inequality asserted in \cite[Exercise 6.10.4, p. 175]{BL} an equality.  

The third result of this paper, Theorem \ref{adelic:NS}, concerns adelic theta groups associated to line bundles on $X$.
In \S \ref{adelic:theta} we review the construction of these groups as it is important to our proof of Theorem \ref{adelic:NS}.

To describe Theorem \ref{adelic:NS} let $I$ denote the set of positive integers which are not divisible by the characteristic of $k$.  Let
$\tor(X) := \{ x \in X(k) : n x = 0 \text{ for some $n \in I$} \} \text{,}$  and let $\V(X) := \idlim \tor(X)$, where the limit is indexed by $I$ and where the maps are given by multiplication by $n / m$ whenever $m$ divides $n$.  

In this notation Theorem \ref{adelic:NS} gives a functorial realization of the  Neron-Severi group of $X$ as a subgroup of the cohomology group $\H^2(\V(X), k^\times)$.

To place our results, concerning theta groups, in proper context it is important to emphasize that our results build on work of Mukai, especially \cite{Muk78}, and Mumford, namely \cite[\S 1]{MumI}.
For instance the concept of semi-homogeneity is due to Mukai, \cite[p. 239]{Muk78}.  In that paper he also characterized simple semi-homogeneous vectors bundles.  Indeed he proved that a simple vector bundle on $X$ is semi-homogeneous if and only if it is the direct image of some line bundle with respect to some isogeny, \cite[Theorem 5.8, p. 260]{Muk78}.   

Note also that prior to Mukai's work \cite{Muk78}, which builds on work of Oda \cite[Corollary 1.7, p. 53]{OdaElliptic71} and \cite[Lemma 1.4, p. 71]{Oda:deRham}, Umemura extended Mumford's theory of theta groups.  He considered theta groups associated to vector bundles on $X$ and determined the weight $1$ representation theory of theta groups associated to simple vector bundles on $X$.  In addition he posed the problem of determining the structure of theta groups associated to vector bundles on $X$ in general and considered theta groups associated to Brauer-Severi varieties over $X$, see \cite[\S 5]{Umemura:1973}, \cite[\S 1 and \S 2]{Umemura:1976}.

Here we say that a simple semi-homogeneous vector bundle is of \emph{separable type} if its Euler characteristic is nonzero and not divisible by the characteristic of the ground field, see \S \ref{semi-homog:prelim}.  In this context our Theorems \ref{non-degen-vb} and \ref{thm:theta:vb:1}  generalize Mumford's \cite[Theorem 1, p. 293]{MumI} and \cite[Theorem 2, p. 297]{MumI} which apply to ample line bundles of separable type on $X$.

As one final comment we mention that there is some overlap amongst our Theorem \ref{mainTheorem1}, which we use to prove Theorem \ref{thm:theta:vb:1}, and work of Goren \cite[Appendix]{Goren:theta:preprint}.  Also a special case of Theorem \ref{non-degen-vb} is implicit in \cite[\S 2.2]{Oprea:2011}.  On the other hand all of the results of this paper were obtained independently in my dissertation \cite{Grieve:PhD:Thesis} and I am not aware of any other reference which states and proves these results explicitly.

\noindent{\bf Acknowledgements.}  
I thank my Ph.D. adviser Mike Roth for useful discussions.  The final writing of this work benefited from conversations with Eyal Goren, Jacques Hurtubise, and criticisms and suggestions given by the referee.

\section{Statement of results}\label{statement:results}
In \S \ref{semi-homog:prelim} we describe the class of vector bundles for which our results concerning theta groups apply.  These results are stated in \S \ref{main:results}.    In \S \ref{main:results2} we formulate our results concerning adelic theta groups.

\subsection{Preliminaries concerning simple semi-homogeneous vector bundles}\label{semi-homog:prelim}
Let $X$ be an abelian variety, defined over an algebraically closed field $k$, and let $T_x : X \rightarrow X$ denote translation by $x \in X$.  
The first two results of this paper, which we state in \S \ref{main:results}, concern theta groups associated to \emph{simple semi-homogeneous vector bundles of separable type} on $X$.

We make this concept precise as follows.  To begin with let $E$ be a vector bundle on $X$.   Then, following Mukai \cite[p. 239]{Muk78}, we say that $E$ is \emph{semi-homogeneous} if for all $x \in X$ there exists a line bundle $L$ on $X$ with the property that $T^*_x E \cong E \otimes L$.  

In the case that $E$ is \emph{simple}, that is if $\dim_k \H^0(X,E^\vee \otimes E) = 1$,  Mukai proved that $E$ is semi-homogeneous if and only if there exists an isogeny $\pi : Y \rightarrow X$ and a line bundle $L$ on $Y$ with the property that $E \cong \pi_* L$, \cite[Theorem 5.8, p. 260]{Muk78}.

In \cite[\S 2.1, p. 6]{Grieve-cup-prod-ab-var} we defined $E$ to be \emph{non-degenerate} if its Euler  characteristic $\chi(E)$ is nonzero.  We then proved, \cite[Proposition 2.1, p. 6]{Grieve-cup-prod-ab-var}, that if $E$ is simple semi-homogenous and non-degenerate then $E$ admits exactly one nonzero cohomology group $\H^{\ii(E)}(X,E)$.  Thus the cohomology groups of non-degenerate simple semi-homogenous vector bundles behave in a manner analogous to those of non-degenerate line bundles.

Here, motivated by the above considerations, as well as Mumford's concept of ample line bundles of separable type \cite[p. 289]{MumI}, we make one more definition which is relevant to what we do here.

\noindent
{\bf Definition.}  Let $E$ be a vector bundle on $X$.  We say that
$E$ is of \emph{separable type} if its Euler characteristic $\chi(E)$ is nonzero and not divisible by the characteristic of $k$.

\subsection{Results concerning theta groups}\label{main:results} Let $E$ be a simple semi-homogeneous vector bundle of separable type on $X$.    If $x \in X$, then we let $\operatorname{Aut}_x(E)$ denote the set of isomorphisms $E \rightarrow T^*_x E$ of $\Osh_X$-modules.  

 In \cite[\S 6, \S 7, and Corollary 7.9, p. 271]{Muk78} Mukai has shown that the group
$$\K(E) := \{ x \in X(k) : \operatorname{Aut}_x(E) \not = \emptyset\}$$
is finite and that $\chi(E)^2 = \# \K(E)$.

The theta group of $E$ is defined as
$ \G(E) := \{(x,\phi) : x \in \K(E) \text{ and } \phi \in \Aut_x(E) \}$ with multiplication given by 
 $(x,\phi) \cdot (y, \psi) := (x + y, \phi * \psi)$ where $\phi * \psi$ is the element of $\Aut_{x + y}(E)$ determined by the composition
 $$ E \xrightarrow{\psi} T^*_y E \xrightarrow{T^*_y \phi} T^*_y(T^*_x E) = T^*_{x + y} (E) \text{.}$$
The group $\G(E)$ is a central extension\footnote{Throughout we employ the (somewhat non-standard) terminology of \cite{MacLane:homology}: if $A$ and $C$ are groups and $1\rightarrow A \rightarrow B \rightarrow C \rightarrow 1$ is a short exact sequence of groups then we say that $B$ is an extension of $A$ by $C$.} of $k^\times$ by $\K(E)$ and acts with weight one on $\H^{\operatorname{i}(E)}(X,E)$.  

Our first result, Theorem \ref{non-degen-vb}, generalizes \cite[Theorem 1]{MumI} and sheds light on a problem posed by Umemura \cite[p. 120]{Umemura:1973}.  It concerns the structure of the group $\G(E)$ and is stated as:

\begin{theorem}\label{non-degen-vb}
Let $E$ be a simple semi-homogenous vector bundle of separable type on $X$.  The theta group $\G(E)$ is a non-degenerate central extension of $k^\times$ by $\K(E)$.
\end{theorem}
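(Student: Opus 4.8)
The plan is to mimic Mumford's argument for line bundles (\cite[\S 1, Theorem 1]{MumI}) but phrased intrinsically in terms of the theta group rather than a Heisenberg presentation. Recall that a central extension $\G(E)$ of $k^\times$ by $\K(E)$ is \emph{non-degenerate} precisely when the associated skew-symmetric commutator pairing $e^E \colon \K(E) \times \K(E) \to k^\times$, defined by $e^E(x,y) = \widetilde{x}\widetilde{y}\widetilde{x}^{-1}\widetilde{y}^{-1}$ for any lifts $\widetilde{x},\widetilde{y}$ of $x,y$, is a perfect pairing on the finite abelian group $\K(E)$; equivalently, the center of $\G(E)$ is exactly $k^\times$. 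So the statement to be proved reduces to: $e^E$ is nondegenerate.

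First I would record the structural facts about $E$ supplied by Mukai's work \cite{Muk78}: since $E$ is simple semi-homogeneous of separable type, there is an isogeny $\pi \colon Y \to X$ (of degree prime to the characteristic, forced by separable type) and a line bundle $M$ on $Y$ with $E \cong \pi_* M$, and moreover $M$ can be taken so that its theta group and its commutator pairing are understood — in particular $\K(M)$ is related to $\K(E)$ via $\pi$, and $\chi(M) = \pm 1$ so $M$ is "nondegenerate of index zero," i.e. its own Mumford pairing on $\K(M)$ is trivial in the relevant sense, while the relevant nondegeneracy gets transported through $\pi$. The key numerical input is Mukai's identity $\chi(E)^2 = \#\K(E)$, which is exactly the Gauss-sum/order statement one expects for a nondegenerate extension, so the theorem is the "group-theoretic upgrade" of that equality.

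The key steps, in order, are: (1) present $\G(E)$ concretely using the elements of $\operatorname{Aut}_x(E)$ — a point of $\G(E)$ over $x \in \K(E)$ is an isomorphism $\varphi \colon E \to T_x^* E$, two such differing by a scalar since $E$ is simple, and the group law composes $E \xrightarrow{\varphi} T_x^* E \xrightarrow{T_x^*\psi} T_x^* T_y^* E = T_{x+y}^* E$; (2) write down the commutator $e^E(x,y)$ in these terms and check it is a well-defined skew pairing valued in $k^\times$ (independence of lifts is immediate from centrality of $k^\times$); (3) let $H = \ker(x \mapsto e^E(x,-))$ be the radical of the pairing, and show $H = 0$. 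For step (3) I would argue by contradiction: if $H \neq 0$, then over $H$ the extension $\G(E)$ is central and, restricting, one produces a line bundle or a direct-sum decomposition of $E$ contradicting simplicity — concretely, a nonzero $x \in H$ with its canonical lift gives a $T_x$-linearization compatible with descent, so $E$ descends along the quotient $X \to X/\langle x\rangle$, and a descended bundle pulled back cannot be simple unless $x = 0$ (here "separable type" guarantees the relevant quotient isogeny is separable so the descent/simplicity bookkeeping goes through). Alternatively and more cleanly, combine the general fact that for any central extension the image of $\K(E)/H$ under $e^E$ has order a perfect square equal to $[\K(E):H]$, with Mukai's $\#\K(E) = \chi(E)^2$ and a lower bound $\#\K(E) \ge \chi(E)^2 \cdot \#H$ coming from the non-degenerate part, forcing $\#H = 1$.

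The main obstacle I expect is step (3): ruling out a nontrivial radical. The soft part — that $e^E$ is a skew pairing and that nondegeneracy is equivalent to the center being $k^\times$ — is formal. The real content is showing the pairing is actually perfect, and the only leverage available is the simplicity of $E$ together with Mukai's count; the delicate point is to convert "$x$ lies in the radical" into an honest geometric consequence (descent of $E$ along an isogeny with kernel containing $x$) without losing track of separability hypotheses, and then to see that such descent contradicts $\dim_k \operatorname{End}_{\Osh_X}(E) = 1$. I would handle this by reducing via $E \cong \pi_* M$ to the corresponding statement for the line bundle $M$, where Mumford's original non-degeneracy theorem and Mukai's explicit description of $\K(E)$ in terms of $\K(M)$ and the isogeny $\pi$ do the work.
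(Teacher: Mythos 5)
There is a genuine gap at exactly the point you flag as the crux: ruling out a nontrivial radical $H=K_0$ of the commutator pairing. Your first argument claims that a nonzero $x\in H$ lets $E$ descend along $X\to X/\langle x\rangle$ and that ``a descended bundle pulled back cannot be simple unless $x=0$.'' That last assertion is false: pullbacks of simple bundles along separable isogenies are frequently simple (every pullback of a line bundle is), and indeed the paper's own Lemma \ref{simple:homog:lemma} shows the descended bundle $F$ is again simple semi-homogeneous of separable type with $f^*F\cong E$ simple --- no contradiction arises from descent alone. Your ``cleaner'' alternative replaces this by the inequality $\#\K(E)\geq \chi(E)^2\cdot \#H$ ``coming from the non-degenerate part,'' but no such inequality is a formal property of central extensions; it is precisely the missing geometric content. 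The paper supplies it as follows: take a maximal level subgroup $\mathcal{K}$ (Lemma \ref{level-subgroup-lemma}) lying over a maximal isotropic $K\subseteq\K(E)$ (which necessarily contains $K_0$), descend $E$ to $F$ on $X/K$, use Proposition \ref{descent:prop} and maximality to see $\K(F)$ is trivial, and then apply Mukai's identity to $F$ together with $\chi(E)=(\#K)\chi(F)$ to get $\chi(E)^2=(\#K)^2$; combined with $\#K=\#K_0\cdot\ell$, $\ell^2=\#(\K(E)/K_0)$, and $\chi(E)^2=\#\K(E)$ this forces $\#K_0=1$. Without some version of this step your counting does not close.

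Your fallback --- reducing to a line bundle via Mukai's presentation $E\cong\pi_*M$ --- is a different route from the paper's and is not carried out; as stated it also contains an error, since $\chi(E)=\chi(\pi_*M)=\chi(M)$, so $\chi(M)$ is not $\pm1$ in general, and the relation between $\G(E)$ and the theta group of $M$ under pushforward is exactly the kind of bookkeeping that would need proof. The paper avoids this entirely by working directly with $E$ and descent along quotients by maximal isotropic subgroups. Your formal steps (the commutator pairing, equivalence of non-degeneracy with the center being $k^\times$, and the role of $\chi(E)^2=\#\K(E)$) are correct, but the theorem's real content is not established in the proposal.
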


A consequence of Theorem \ref{non-degen-vb} is that every simple semi-homogenous vector bundle of separable type $E$ on $X$ determines a sequence of integers $d = (d_1,\dots, d_p)$, $d_{i+1} \mid d_{i}$, which we refer to as the \emph{type of its theta group $\G(E)$}.  This fact, together with Theorem \ref{non-degen-vb}, allows us to determine the representation theory of $\G(E)$.

\begin{theorem}\label{thm:theta:vb:1}
Let $E$ be a simple semi-homogenous vector bundle of separable type on $X$.  Let $d = (d_1,\dots, d_p)$ be the type of $\G(E)$.  The following assertions hold:
\begin{enumerate}
\item{the theta group $\G(E)$ admits exactly $\gcd(n,d_1)^2 \times \cdots \times \gcd(n,d_p)^2$ non-isomorphic irreducible weight $n$ $\G(E)$-module(s);}
\item{a weight $n$ representation is irreducible if and only if it has dimension $\frac{d_1 \times \cdots  \times d_p}{\gcd(n,d_1) \times \cdots \times \gcd(n,d_p)}$;}
\item{every weight $n$ $\G(E)$-module decomposes into a direct sum of irreducible weight $n$ $\G(E)$-modules.  Every $\G(E)$-module decomposes into a direct sum of weight $n$ $\G(E)$-modules.}
\end{enumerate}
\end{theorem}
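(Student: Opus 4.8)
The plan is to reduce everything to the abstract structure theory of non-degenerate central extensions of $k^\times$, which Theorem \ref{non-degen-vb} puts at our disposal. By that theorem, $\G(E)$ is a non-degenerate central extension of $k^\times$ by $\K(E)$, so it has a well-defined type $d = (d_1,\dots,d_p)$ with $d_i \mid d_{i+1}$; concretely there is an isomorphism of the commutator pairing data with the standard Heisenberg-type pairing on $\prod_i (\ZZ/d_i \oplus \ZZ/d_i)$, exactly as in Mumford's theory for line bundles. So the first move is: establish that the statement depends only on the type $d$ and the weight $n$, and then prove it for the ``standard'' model of such a group — the obvious central extension built from the pairing on $(\ZZ/d)^2$, where $d = \operatorname{lcm}(d_i)$, or more precisely the product of Heisenberg groups $\G_{d_1} \times \cdots \times \G_{d_p}$ amalgamated along $k^\times$. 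This reduction is the conceptual heart; once it is in place the rest is a weight-$n$ refinement of \cite[\S 1, Proposition 3 and Theorem 2]{MumI}.

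For part (c), I would argue as follows. A $\G(E)$-module is the same as a representation of the underlying abstract group on which the central $k^\times$ need not act by scalars a priori; but since $k^\times$ is central and $k$ is algebraically closed, on any finite-dimensional submodule $k^\times$ acts semisimply and its eigenspaces are $\G(E)$-submodules, giving the decomposition into weight-$n$ pieces (here ``weight $n$'' means $k^\times$ acts through $t \mapsto t^n$). For the decomposition of a weight-$n$ module into irreducibles, I would pass to the finite quotient: a weight-$n$ module factors through the finite group $\G(E)/\mu$ where $\mu = \{t \in k^\times : t^n \cdot(\text{kernel of the central character})\}$ — more cleanly, through $\G(E)/\ker(\chi_n)$ where $\chi_n \colon k^\times \to k^\times$ is $t \mapsto t^n$, which is a finite group — and finite group representations over an algebraically closed field of the relevant characteristic are completely reducible provided the order is prime to the characteristic; here the separable-type hypothesis guarantees $\#\K(E) = \chi(E)^2$ is prime to $\operatorname{char} k$, so the relevant finite quotient has order prime to the characteristic and Maschke applies.

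For parts (a) and (b), I would use the Stone–von Neumann–Mumford philosophy in the weight-$n$ setting. Fix a weight-$n$ module $W$. The obstruction to decomposing the pairing cleanly is that the commutator pairing $e^{\G(E)} \colon \K(E) \times \K(E) \to k^\times$ composed with $t \mapsto t^n$ (equivalently, the pairing governing weight-$n$ representations) is no longer perfect: its radical is the ``$n$-torsion-compatible'' part, and working it out component by component on $\prod_i(\ZZ/d_i)^2$ shows the radical has order $\prod_i \gcd(n,d_i)^2$ and the induced non-degenerate pairing lives on a group of order $\prod_i (d_i/\gcd(n,d_i))^2$. Choosing a maximal isotropic subgroup for this induced pairing, of order $\prod_i d_i/\gcd(n,d_i)$, one builds, exactly as Mumford does, a unique-up-to-iso irreducible of dimension $\prod_i d_i/\gcd(n,d_i)$ for each of the $\prod_i \gcd(n,d_i)^2$ characters of the radical; conversely any irreducible is obtained this way, and dimension count plus Schur orthogonality over the finite quotient forces the ``if and only if'' in (b). The induced-module interpretation promised in \S \ref{induced:reps} makes the construction of these irreducibles transparent, but for the present theorem one only needs their existence, uniqueness, count, and dimension.

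The step I expect to be the main obstacle is the first reduction: verifying carefully that the type $d$ of $\G(E)$, as furnished abstractly by Theorem \ref{non-degen-vb}, genuinely controls the weight-$n$ representation theory, i.e. that two non-degenerate central extensions of $k^\times$ with the same type have equivalent categories of weight-$n$ modules. This requires pinning down the precise sense in which $\G(E)$ is isomorphic to the standard Heisenberg model — an isomorphism of group schemes, or at least of abstract groups compatible with the embedding of $k^\times$ — and checking that such an isomorphism transports weight-$n$ modules to weight-$n$ modules. Once that bookkeeping is done, everything else is the classical argument applied to a slightly degenerate pairing, together with Maschke's theorem made available by the separable-type hypothesis.
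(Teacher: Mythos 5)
Your overall route is the paper's own: by Theorem \ref{non-degen-vb} the group $\G(E)$ is a non-degenerate central extension of $k^\times$ by $\K(E)$; the reduction to the standard model determined by the type $d$ is exactly Proposition \ref{normal:form:theta} (it is routine, not the main obstacle you anticipate); and the weight-$n$ Stone--von Neumann analysis you sketch for (a) and (b) --- radical of the $n$-twisted pairing of order $\gcd(n,d_1)^2\cdots\gcd(n,d_p)^2$, irreducibles of dimension $\frac{d_1\cdots d_p}{\gcd(n,d_1)\cdots\gcd(n,d_p)}$ classified by the possible central characters, realized as induced modules from a maximal isotropic/level subgroup --- is in substance the construction carried out in \S\ref{proof:mainTheorem1} (Propositions \ref{key:Proposition}, \ref{existence:Prop}, \ref{number:non:equiv}), so those parts are fine in outline, modulo actually writing out the uniqueness-per-central-character argument you defer to Mumford.

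There is, however, a concrete error in your argument for (c). A weight-$n$ module with $n \neq 0$ does \emph{not} factor through a finite quotient of $\G(E)$: $\ker(\chi_n)=\mu_n$ is finite, so $\G(E)/\ker(\chi_n)$ is still a central extension of $k^\times$ by $\K(E)$, and the image of the central $k^\times$ in $\operatorname{GL}(V)$ is infinite; hence Maschke's theorem cannot be applied to any quotient as you propose. The repair is to average over a finite \emph{subgroup} instead, e.g. $G' = \{(\alpha,x,y): \alpha\in\mu_{d_1}\}$ as in the proof of Corollary \ref{cor:key:prop}: the $G'$-stable complement produced by the averaged projection is automatically $\G$-stable because the remaining central scalars act by the fixed weight $n$ on a weight module (alternatively, apply Maschke to the finite image in $\operatorname{PGL}(V)$ and then restore the scalars). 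Second, the last clause of (c) --- decomposition of an arbitrary $\G(E)$-module into weight modules --- does not follow merely from centrality of $k^\times$ and $k$ being algebraically closed: the abstract group $k^\times$ admits finite-dimensional representations that are not semisimple and characters that are not of the form $t \mapsto t^n$ (for instance via nontrivial homomorphisms $k^\times \rightarrow (k,+)$ in characteristic $0$, or via field automorphisms). The paper excludes these by fiat: a $\G$-module is defined so that the matrix coefficients of the central action are Laurent polynomials in the central parameter, and it is this algebraicity hypothesis, not a Schur-type generality, that gives the weight-space decomposition; your write-up needs to invoke (or impose) the same condition.
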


As explained in \S \ref{semi-homog:prelim}, each simple semi-homogeneous vector bundle of separable type on $X$ admits exactly one nonzero cohomology group. 
Combining this fact with Theorems \ref{non-degen-vb} and \ref{thm:theta:vb:1} we obtain:

\begin{corollary}\label{unique:theta:irred}
Let $E$ be a simple semi-homogenous vector bundle of separable type on $X$.  The unique nonzero cohomology group $\H^{\ii(E)}(X, E)$ is the unique irreducible weight $1$ representation of its theta group $\G(E)$.
\end{corollary}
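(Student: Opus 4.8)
The plan is to combine three facts that are already available. First, by Theorem~\ref{thm:theta:vb:1}(1) with $n = 1$, the theta group $\G(E)$ has exactly $\gcd(1,d_1)^2 \times \cdots \times \gcd(1,d_p)^2 = 1$ irreducible weight~$1$ representation, so there is a \emph{unique} irreducible weight~$1$ $\G(E)$-module; by part~(2) its dimension is $d_1 \times \cdots \times d_p = \chi(E)$ (using $\#\K(E) = \chi(E)^2$ from Mukai, so that $d_1\cdots d_p = \sqrt{\#\K(E)} = |\chi(E)|$). Second, by \cite[Proposition~2.1]{Grieve-cup-prod-ab-var} the bundle $E$ has exactly one nonvanishing cohomology group $\H^{\ii(E)}(X,E)$, and by Riemann--Roch for abelian varieties its dimension is $(-1)^{\ii(E)}\chi(E) = |\chi(E)|$. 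Thus the candidate module has exactly the right dimension to be \emph{the} irreducible weight~$1$ representation, once we know it carries a weight~$1$ $\G(E)$-action at all.

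So the first real step is to exhibit the natural action of $\G(E)$ on $\H^\bullet(X,E)$ and to check it has weight~$1$. An element of $\G(E)$ is a pair $(x,\varphi)$ with $x \in \K(E)$ and $\varphi \colon E \xrightarrow{\sim} T_x^* E$; such a pair induces an isomorphism on cohomology
\[
\H^i(X,E) \xrightarrow{\varphi_*} \H^i(X, T_x^* E) \xrightarrow{(T_{-x}^*)^{-1}} \H^i(X,E),
\]
using that $T_x$ is an isomorphism of $X$. One checks this is compatible with the group law of $\G(E)$ (this is exactly how $\G(E)$ is defined to act in \S\ref{theta:quasi:coherent}), and that the central $k^\times$ acts by scalars in the obvious way, i.e.\ with weight~$1$. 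Applying this to $i = \ii(E)$ gives a nonzero weight~$1$ $\G(E)$-module $\H^{\ii(E)}(X,E)$ of dimension $|\chi(E)| = d_1\cdots d_p$.

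The final step is to invoke Theorem~\ref{thm:theta:vb:1}(2): a weight~$1$ representation is irreducible if and only if its dimension equals $d_1 \times \cdots \times d_p$. Since $\dim_k \H^{\ii(E)}(X,E) = |\chi(E)| = d_1\cdots d_p$, the module is irreducible, and by part~(1) it is the unique irreducible weight~$1$ $\G(E)$-module. I expect the main obstacle to be purely bookkeeping: pinning down the precise definition of the $\G(E)$-action on cohomology from \S\ref{theta:quasi:coherent} and verifying the weight is~$1$ (i.e.\ that scalars act as scalars, with no twist), together with the sign/Riemann--Roch check that the unique nonzero $\H^i$ has dimension exactly $|\chi(E)|$ rather than some other value. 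Once those are in place the statement follows formally from the two cited theorems.
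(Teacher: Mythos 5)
Your argument is correct and follows essentially the same route as the paper: both deduce from Theorem \ref{thm:theta:vb:1} (with $n=1$) that $\G(E)$ has a unique irreducible weight $1$ module of dimension $d_1\cdots d_p=\sqrt{\#\K(E)}=|\chi(E)|$, and then identify the unique nonzero cohomology group $\H^{\ii(E)}(X,E)$, which carries the natural weight $1$ action of $\G(E)$ and has dimension $|\chi(E)|$, as that module via the dimension criterion. Your explicit verification of the $\G(E)$-action on cohomology and the Riemann--Roch bookkeeping is only a more detailed spelling-out of what the paper's short proof takes for granted.
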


We prove Theorem \ref{non-degen-vb} in \S \ref{theta:loc:free}, while we prove Theorem \ref{thm:theta:vb:1} and Corollary  \ref{unique:theta:irred} in \S \ref{proof:theta:rep:vb}.

\subsection{Results concerning adelic theta groups}\label{main:results2}
Let $X$ be an abelian variety defined over an algebraically closed field $k$.
Let $I$ denote the set of positive integers which are not divisible by the characteristic of $k$.  Let 
$$\tor(X) := \{ x \in X(k) : n x = 0 \text{ for some $n \in I$} \} \text{,}$$  and let $\V(X) := \idlim \tor(X)$, where the limit is indexed by $I$ and where the maps are given by 
$$[ n / m ] : \tor(X) \rightarrow \tor(X)$$ whenever $m$ divides $n$.  

Let $L$ be the total space of a line bundle on $X$.  Mumford has constructed a group $\widehat{\G}(L)$, the \emph{adelic theta group of $L$}, see \cite[\S 7]{MumII} and \cite[Chapter 4]{theta3}; this group is a central extension of $k^\times$ by $\V(X)$. 
We recall some aspects of Mumford's construction of $\widehat{\G}(L)$ in \S \ref{adelic:theta}.  

In \S \ref{proof:adelic:theta} we prove that the Neron-Severi group of $X$, which we denote by $\NS(X)$, can be canonically identified with a subgroup of the cohomology group $\H^2(\V(X), k^\times)$.  Here $\H^2(\V(X), k^\times)$ denotes the usual second cohomology  group of the trivial $\V(X)$-module $k^\times$.  
More specifically we establish:  

\begin{theorem}\label{adelic:NS}
The map $\widehat{\G} : \NS(X) \rightarrow \H^2(\V(X),k^\times)$, defined by sending the class of a line bundle $L$ to that of its adelic theta group $\widehat{\G}(L)$, is an injective group homomorphism.  It satisfies the following functorial property: if $f : X \rightarrow Y$ is a homomorphism of abelian varieties then the diagram
$$ \xymatrix{ \NS(X) \ar[r]^-{\widehat{\G}} & \H^2(\V(X),k^\times) \\
\ar[u]^-{f^*} \NS(Y) \ar[r]^-{\widehat{\G}} & \H^2(\V(Y),k^\times) \ar[u]^-{f^*}}$$ commutes.
\end{theorem}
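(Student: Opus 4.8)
The plan is to build the map $\widehat{\G}$ directly from Mumford's adelic theta group construction and then verify injectivity and functoriality by reducing to the finite level. First I would recall that $\widehat{\G}(L)$, as a central extension of $k^\times$ by $\V(X)$, determines a cohomology class in $\H^2(\V(X);k^\times)$, and that two line bundles $L$, $L'$ give isomorphic (as extensions) adelic theta groups precisely when the extension classes agree; the key input here is that $\widehat{\G}(L)$ depends only on the class of $L$ in $\NS(X)$, which follows from the corresponding fact for the finite theta groups $\G(L^n)$ and the way $\widehat{\G}(L)$ is assembled from them as a restricted product / inverse-then-direct limit over $n \in I$. So the assignment $L \mapsto [\widehat{\G}(L)]$ is well-defined on $\NS(X)$.

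Next I would check that $\widehat{\G}$ is a group homomorphism. For this I would use the Baer-sum description of $\H^2(\V(X);k^\times)$ together with the compatibility, already present in Mumford's theory, of theta groups with tensor products of line bundles: there is a natural map $\G(L_1) \times_{k^\times} \G(L_2) \to \G(L_1 \otimes L_2)$ at each finite level, and passing to the adelic limit gives that $\widehat{\G}(L_1 \otimes L_2)$ is the Baer sum of $\widehat{\G}(L_1)$ and $\widehat{\G}(L_2)$. Hence $\widehat{\G}(\text{class of } L_1) + \widehat{\G}(\text{class of } L_2) = \widehat{\G}(\text{class of } L_1 \otimes L_2)$ in $\H^2(\V(X);k^\times)$, which is exactly the homomorphism property once one knows well-definedness on $\NS(X)$.

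For injectivity, suppose $[\widehat{\G}(L)] = 0$, i.e.\ the extension $\widehat{\G}(L)$ splits as $k^\times \times \V(X)$. A splitting would restrict, for each $n \in I$, to a splitting of the finite theta group $\G(L^n)$ over the $n$-torsion $\tor_n(X) = X[n]$, since $\V(X)$ surjects onto each $\tor_n(X)$ compatibly. But Mumford's theory says $\G(L^n)$ splits over $X[n]$ only when the commutator pairing $e^{L^n}$ on $X[n]$ is trivial, and by non-degeneracy of the Weil pairing attached to an ample class this forces the polarization type of $L^n$, hence of $L$, to be trivial in $\NS(X)$ once we let $n$ grow; more precisely the $e^L$-pairing on $\V(X)$ encodes the class of $L$ in $\NS(X)$ via the skew form $E \colon \V(X) \times \V(X) \to \QQ/\ZZ(1)$, and this form is zero iff $L \equiv 0$ in $\NS(X)$. (One must handle the non-ample classes by writing an arbitrary class as a difference of ample ones, using the homomorphism property just established.)

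Finally, for the functorial square I would exhibit, for a homomorphism $f \colon X \to Y$, a natural map $\widehat{\G}(L) \to \widehat{\G}(f^*L)$ covering $f \colon \V(X) \to \V(Y)$ — this comes from pulling back sections of $L$ along $f$ at each finite level, i.e.\ from the finite-level maps $\G(L^n) \to \G(f^*L^n)$ that Mumford's construction already provides — and then observe that $f^* \colon \H^2(\V(Y);k^\times) \to \H^2(\V(X);k^\times)$ sends the class of $\widehat{\G}(L)$ to the class of this pulled-back extension, which is $\widehat{\G}(f^*L)$; commutativity of the square is then the statement that $f^*$ on $\NS$ and $f^*$ on $\V(X)$-cohomology are intertwined by $\widehat{\G}$, which is immediate from the functoriality of the finite-level theta-group maps in $n$ and in $f$. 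The main obstacle I expect is the injectivity step: relating the vanishing of the adelic extension class to the vanishing of the class in $\NS(X)$ requires carefully tracking how the commutator pairings $e^{L^n}$ at finite level fit together into a single pairing on $\V(X)$ and invoking non-degeneracy in a way that is uniform in $n$; the functoriality and homomorphism properties, by contrast, should reduce cleanly to already-established finite-level compatibilities in Mumford's work.
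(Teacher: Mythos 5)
Your outline follows essentially the same route as the paper: the homomorphism property via tensoring theta-automorphisms at finite level (the paper does this with explicit sections $\sigma_L,\sigma_M,\sigma_{L\otimes M}$ and factor sets), injectivity by showing the kernel is $\Pic^0(X)$ through the commutator pairing and the Weil pairing, and functoriality by exhibiting $\widehat{\G}(f^*L)$ as the pullback of the extension $\widehat{\G}(L)$ along $\V(f)$ (the paper's map $\widehat{\G}(f)\colon \widehat{\G}(f^*L)\to\widehat{\G}(L)$; note your map is written in the opposite direction to the one that can cover $\V(f)$). Two small repairs are needed in your injectivity step. First, a splitting of $\widehat{\G}(L)$ does not literally ``restrict to a splitting of the finite theta group over $X[n]$'': the level-$n$ projection is only defined over $\TT(X)$, and since $\TT(X)$ is torsion-free there is no homomorphic lift $X[n]\to\TT(X)$ (one can patch this using divisibility of $k^\times$, but it is unnecessary); all you need is that triviality of the class makes $\widehat{\G}(L)$ abelian, hence kills the commutator form, which transfers to finite level by the identity $[\x,\y]_{\widehat{\G}(L)}=[x_p,y_p]_{G(p_X^*L)}$ — this is exactly how the paper argues. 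Second, the paper then concludes directly for an arbitrary $L$ via Mumford's formula $\overline{e_n}(x,\phi_L(y))=[x,z]_{G(n_X^*L)}$, the non-degeneracy of the Weil pairing $\overline{e_n}$ (no ampleness needed), and Zariski density of $\tor(X)$, so your detour through ample classes and differences of ample classes can be dropped; likewise, well-definedness on $\NS(X)$ falls out of this same step (for $L\in\Pic^0(X)$ the group $\widehat{\G}(L)$ is abelian, and abelian extensions by the divisible group $k^\times$ split), rather than from the asserted finite-level invariance under $\Pic^0$-twists.
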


\section{Theta groups and quasi-coherent sheaves}

Let $X$ be an abelian variety defined over an algebraically closed field $k$.  In this section we construct theta groups, associated to quasi-coherent sheaves on $X$, and determine some of their basic properties.

\subsection{Preliminaries from descent theory}\label{descent}  
Let $K \subseteq X$ be a finite subgroup and assume that the order of $K$ is not divisible by the characteristic of $k$.  We consider descent of quasi-coherent sheaves with respect to the quotient map $f : X \rightarrow Y = X / K$.

Let $\mathrm{Descent}(\mathrm{QCoh}(X),f)$ denote the category whose objects are descent data $(F,\phi)$, with respect to $f$, and where a morphism between pairs $(F,\phi)$ and $(G,\psi)$ is given by an element $\alpha \in \Hom_{\Osh_X}(F,G)$ which has the property that the diagram
$$ \xymatrix{ F \ar[d]_-{\phi_x} \ar[r]^-{\alpha} &  G \ar[d]^-{\psi_x} \\
			T^*_x F \ar[r]^-{T^*_x \alpha} & T^*_x G  } $$
commutes for all $x \in K$.		

Grothendieck has proven that the pullback functor $f^* : \mathrm{QCoh}(Y) \rightarrow \mathrm{Descent}(\mathrm{QCoh}(X),f)$ is an equivalence of categories.   A proof of this result can be found in \cite[\S 6.1 Theorem 4, p. 134]{B-L-R}, for instance, or \cite[\S 7 Proposition 2, p. 66 and \S 12 Theorem 1, p. 104]{Mum}.

In \S \ref{theta:loc:free} we apply the following elementary observation in our proof of Lemma \ref{simple:homog:lemma}.
 \begin{lemma}\label{decent:lemma}
 If a coherent sheaf $E$ on $X$ descends, via a separable isogeny $f : X \rightarrow Y$ to a coherent sheaf $F$ on $Y$, then 
 $ \dim_k \End_{\Osh_X}(E) \geq \dim_k \End_{\Osh_Y}(F)\text{.}$
 \end{lemma}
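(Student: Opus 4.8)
The plan is to combine the hypothesis $E \cong f^{*}F$ with two standard facts and then count $k$-dimensions. Since $X$ (hence $Y$) is projective over $k$ and $E,F$ are coherent, both $\End_{\Osh_X}(E)$ and $\End_{\Osh_Y}(F)$ are finite dimensional over $k$, so it suffices to produce a $k$-linear embedding $\End_{\Osh_Y}(F) \hookrightarrow \End_{\Osh_X}(E)$.

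First I would rewrite the left-hand side. By the $(f^{*},f_{*})$ adjunction, $\End_{\Osh_X}(E) \cong \Hom_{\Osh_X}(f^{*}F, f^{*}F) \cong \Hom_{\Osh_Y}(F, f_{*}f^{*}F)$, and since $f$ is affine the projection formula gives $f_{*}f^{*}F \cong F \otimes_{\Osh_Y} f_{*}\Osh_X$. Next I would observe that the unit $\Osh_Y \hookrightarrow f_{*}\Osh_X$ is a split monomorphism of $\Osh_Y$-modules: $f$ is finite and locally free, so it carries a trace map $\operatorname{tr}_{X/Y}\colon f_{*}\Osh_X \to \Osh_Y$, and the composite $\Osh_Y \to f_{*}\Osh_X \xrightarrow{\operatorname{tr}_{X/Y}} \Osh_Y$ is multiplication by $n := \deg f$, which is invertible in $k$ because in the situation of \S\ref{descent} the map $f$ is the quotient $X \to X/K$ and $n = \#K$ is prime to $\operatorname{char} k$; thus $\tfrac{1}{n}\operatorname{tr}_{X/Y}$ retracts the unit. (Equivalently, $f_{*}\Osh_X$ carries the $K$-action induced by translation, its invariant subsheaf is $\Osh_Y$, and averaging over $K$ furnishes the retraction.)

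Putting these together, $f_{*}f^{*}F \cong F \otimes_{\Osh_Y}(\Osh_Y \oplus N) \cong F \oplus (F \otimes_{\Osh_Y} N)$ for an $\Osh_Y$-module $N$, so $F$ is an $\Osh_Y$-module direct summand of $f_{*}f^{*}F$. Hence the canonical inclusion induces a split $k$-linear injection $\End_{\Osh_Y}(F) = \Hom_{\Osh_Y}(F,F) \hookrightarrow \Hom_{\Osh_Y}(F, f_{*}f^{*}F) \cong \End_{\Osh_X}(E)$, and taking dimensions gives the asserted inequality. The only place the hypothesis on $f$ is used is the splitting of $\Osh_Y \hookrightarrow f_{*}\Osh_X$ — equivalently, the exactness of the $K$-averaging operator — so that is the point to get right; everything else is formal. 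A more conceptual variant sidesteps the projection formula entirely and needs no splitting: transport the canonical descent datum on $f^{*}F$ across the isomorphism $E \cong f^{*}F$, use Grothendieck's descent equivalence to identify $\End_{\Osh_Y}(F)$ with the ring $\bigl(\End_{\Osh_X}(E)\bigr)^{K}$ of $K$-invariant endomorphisms, and note that the invariant subspace of a finite-dimensional representation has dimension at most that of the whole space.
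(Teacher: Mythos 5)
Your argument is correct, but your main route is genuinely different from the paper's. The paper's proof is a one-liner that is essentially your closing ``conceptual variant'': since the descent functor is fully faithful, $\End_{\Osh_Y}(F)$ is identified with the subspace of $\End_{\Osh_X}(E)$ consisting of endomorphisms commuting with the descent data, and the inequality is immediate; this uses nothing beyond flat descent and works with no hypothesis on $\deg f$. Your primary argument instead goes through adjunction, the projection formula, and the trace splitting of $\Osh_Y \to f_{*}\Osh_X$ to exhibit $F$ as a direct summand of $f_{*}f^{*}F$, hence a split injection $\End_{\Osh_Y}(F) \hookrightarrow \End_{\Osh_X}(E)$. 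That is valid, and it buys slightly more (a canonical splitting, and it only uses $E \cong f^{*}F$ rather than the full descent datum), but note that the splitting step really does require $\deg f$ invertible in $k$, which is not a consequence of separability alone: in characteristic $p$ an ordinary abelian variety admits an \emph{\'etale} isogeny of degree $p$, and there $\operatorname{tr}_{X/Y}(1) = p = 0$ and the unit $\Osh_Y \to f_{*}\Osh_X$ is in general a non-split extension. So as a proof of the lemma exactly as stated (arbitrary separable isogeny) your main argument needs the standing assumption of the surrounding section, namely that the order of $K$ (equivalently $\deg f$) is prime to $\operatorname{char} k$ --- which does hold in every application in the paper --- whereas the descent/fully-faithfulness argument, i.e.\ the paper's proof and your alternative, covers the general case with no such restriction.
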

 \begin{proof}
Since the descent functor is fully faithful the $k$-vector space $\End_{\Osh_Y}(F)$ is identified with the subspace of $\End_{\Osh_X}(E)$ which commutes with the descent data.
 \end{proof}

\subsection{Construction of theta groups}\label{theta:quasi:coherent}
Let $F$ be a quasi-coherent sheaf on $X$.  If $x \in X(k)$ then let 
$\Aut_x(F) := \{ \phi \in \Hom_{\Osh_X}(F, T^*_x F) : \text{$\phi$ is an isomorphism}\}\text{.}$
If $\Aut_x(F) \not = \emptyset$ then it is an $\Aut(F)$-torsor (or principal homogeneous space).  
Let 
$$\K(F) := \{ x \in X(k) : \Aut_x(F) \not = \emptyset \} $$
and observe that $\K(F)$ is a subgroup of $X(k)$.
 The theta group of $F$ is
$$ \G(F) := \{(x,\phi) : x \in \K(F) \text{ and } \phi \in \Aut_x(F) \}$$ where multiplication is defined by 
 $(x,\phi) \cdot (y, \psi) := (x + y, \phi * \psi)$, where $\phi * \psi$ is the element of $\Aut_{x + y}(F)$ determined by the composition
 $$ F \xrightarrow{\psi} T^*_y F \xrightarrow{T^*_y \phi} T^*_y(T^*_x F) = T^*_{x + y} (F) \text{.}$$
The homomorphisms 
 $\text{$\iota_F : \Aut(F) \rightarrow \G(F)$,  and $\pi_F : \G(F) \rightarrow \K(F)$,}$ defined respectively by $\alpha \mapsto (0, \alpha)$ and  $(x, \phi) \mapsto x$, determine a short exact sequence of groups
 $$ 1 \rightarrow \Aut(F) \xrightarrow{\iota_F } \G(F) \xrightarrow{\pi_F} \K(F) \rightarrow 0 $$
 and makes $\G(F)$ an extension of $\Aut(F)$ by $\K(F)$.
 
\subsection{Level subgroups and descent} 
Let $F$ be a quasi-coherent sheaf on $X$.  Here we generalize \cite[Proposition 1, p. 291]{MumI} and relate certain subgroups of $\G(F)$ and descent data for $F$ with respect to suitably defined isogenies.  

\noindent
{\bf Definitions} (\cite[p. 291]{MumI}){\bf.} \begin{itemize}
\item{A subgroup $\mathcal{K} \subseteq \G(F)$ is a \emph{level subgroup} if it is finite, has order not divisible by the characteristic of $k$, and if the homomorphism $\pi_F : \mathcal{K} \rightarrow \pi_F(\mathcal{K})$ is injective.}
\item{Let $K \subseteq \K(F)$ be a subgroup and let $\mathcal{K} \subseteq \G(F)$ be a level subgroup.  We say that $\mathcal{K}$ \emph{lies over} $K$ if $\pi_F(\mathcal{K}) = K$.}
\end{itemize}

\begin{proposition}[Compare with {\cite[Proposition 1, p. 291]{MumI}}]  Let $F$ be a quasi-coherent sheaf on $X$ and let $K \subseteq \K(F)$ be a subgroup.
There exists a one to one correspondence between level subgroups $\mathcal{K} \subseteq \G(F)$ lying over $K$ and (effective) descent datum $(F,\phi)$ with respect to the quotient map $f : X \rightarrow X /  K$.
\end{proposition}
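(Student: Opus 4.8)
The plan is to construct the correspondence in both directions and check that they are mutually inverse. Given a level subgroup $\mathcal{K} \subseteq \G(F)$ lying over $K$, the injectivity of $\pi_F \mid \mathcal{K}$ means that $\mathcal{K}$ is the graph of a section $\sigma \colon K \rightarrow \G(F)$ of $\pi_F$; writing $\sigma(x) = (x, \phi_x)$ defines for each $x \in K$ a distinguished element $\phi_x \in \Aut_x(F)$, and I set $\phi := \{\phi_x : x \in K\}$. The pair $(F,\phi)$ is then covering datum. The fact that $\mathcal{K}$ is a \emph{subgroup} of $\G(F)$ — i.e. that $\sigma$ is a group homomorphism, $(x,\phi_x)\cdot(y,\phi_y) = (x+y,\phi_{x+y})$ — translates, by the very definition of the product $\phi_x * \phi_y$ in $\G(F)$, into the assertion that the composite $F \xrightarrow{\phi_y} T^*_y F \xrightarrow{T^*_y \phi_x} T^*_y(T^*_x F) = T^*_{x+y} F$ equals $\phi_{x+y}$. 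This is exactly the commutativity of the hexagon \eqref{descent:commute}, so $(F,\phi)$ is descent datum. (The hexagon is a priori asymmetric in $x,y$; symmetry follows from commutativity of the group $K$ together with the cocycle identity, or can simply be read off both ways from the subgroup axiom applied to $(x,\phi_x)\cdot(y,\phi_y)$ and $(y,\phi_y)\cdot(x,\phi_x)$, which agree since $\mathcal{K}$ is abelian being a subgroup of $\K(F)$ via $\pi_F$.)

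Conversely, given descent datum $(F,\phi)$ with $\phi = \{\phi_x\}_{x\in K}$, I define $\mathcal{K} := \{(x,\phi_x) : x \in K\} \subseteq \G(F)$. The hexagon identity says precisely that $\mathcal{K}$ is closed under the product of $\G(F)$ and hence a subgroup; it is finite with $\#\mathcal{K} = \#K$, which divides $\#\K(F)$ and so — when $F$ is the bundle $E$ of the applications, or more generally whenever $K$ has order prime to the characteristic — has order not divisible by $\operatorname{char} k$, and $\pi_F \mid \mathcal{K}$ is injective by construction since $(x,\phi_x) \mapsto x$ has the obvious inverse $x \mapsto (x,\phi_x)$. (If one wants the statement for arbitrary quasi-coherent $F$ and arbitrary $K$, one should restrict the correspondence to subgroups $K$ of order prime to the characteristic, which is the standing hypothesis throughout; I will note this.) Thus $\mathcal{K}$ is a level subgroup lying over $K$.

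That the two constructions are inverse to each other is immediate: starting from $\mathcal{K}$, extracting $\phi$, and rebuilding $\mathcal{K}$ returns the same set of pairs; starting from $(F,\phi)$, forming $\mathcal{K}$, and reading off the distinguished elements returns the same $\phi$. Finally, the effectivity of $(F,\phi)$ — the assertion that it descends to some $H \in \mathrm{QCoh}(X/K)$ — is not an extra condition to verify but a consequence of Grothendieck's theorem that $f^*$ is an equivalence of categories, cited in \S\ref{descent}; this is why the word ``effective'' is placed in parentheses in the statement. I expect the main (and only) real point to be the bookkeeping identifying the subgroup axiom in $\G(F)$ with the hexagon \eqref{descent:commute}, together with the care needed to see that the apparent $x\leftrightarrow y$ asymmetry of \eqref{descent:commute} is harmless because $K$ is abelian; everything else is formal.
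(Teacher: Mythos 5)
Your proposal is correct and follows essentially the same route as the paper: you identify the level subgroup with the graph of a section of $\pi_F$ over $K$, translate the subgroup axiom into the commutativity of the diagram \eqref{descent:commute} (and vice versa), and observe that the two constructions are manifestly mutual inverses, with effectivity supplied by Grothendieck's descent theorem. Your added remarks on the $x \leftrightarrow y$ symmetry and on the standing hypothesis that $K$ be finite of order prime to the characteristic are sensible bookkeeping points that the paper leaves implicit, but they do not change the argument.
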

\begin{proof}
Let $\sigma$ be the inverse of $\pi_F $.  If $x \in K$, then let $\phi_x$ be the element of $\Aut_x(F)$ determined by $\sigma$.  Let $\phi$ be the set consisting of these isomorphisms.  Since $\sigma$ is a group homomorphism the diagram
\begin{equation}\label{descent:commute}\xymatrix{ F \ar[r]^-{\phi_x}  \ar[drr]_-{\phi_{x + y}} & T^*_x F \ar[r]^-{T^*_x \phi_y} & T^*_x (T^* _y F)   \\ 
 &  &  T^*_{x + y} F \ar@{=}[u] }
\end{equation}
 commutes for all $x,y \in K$.  Hence the pair $(F,\phi)$ is descent datum.  

Conversely if $(F,\phi)$ is descent datum then define 
$\mathcal{K} := \{(x,\phi_x) : \phi_x \in \phi \}\text{.} $  Since $(F,\phi)$ is descent datum the diagram \eqref{descent:commute} commutes, for all $x,y \in K$, and we deduce that this implies that $\mathcal{K}$ is a subgroup.  In addition the map $(x, \phi_x) \mapsto x$ is an isomorphisms of groups.

Finally it is clear, by construction, that the correspondences just defined are mutual inverses.
\end{proof}

\subsection{Theta groups and isogenies}
Let $F$ be a quasi-coherent sheaf on $X$, $\mathcal{K} \subseteq \G(F)$ a level subgroup, and  $(F,\phi)$ the descent datum determined by $\mathcal{K}$.  Let $K := \pi_F(\mathcal{K})$, $Y := X / K$, and $f : X \rightarrow Y$ the quotient map.  

Since $(F,\phi)$ is effective there exists a quasi-coherent sheaf $H$ on $Y$ with the property that $(F,\phi)$ is isomorphic to $(f^*H,\operatorname{can}(H))$, the canonical descent data determined by $f^*H$.  As a consequence there exists an isomorphism $\alpha : f^* H \rightarrow F$ of $\Osh_X$-modules.

We now use $\alpha$ to relate $\G(H)$, $\G(F)$, and the centralizer $\operatorname{C}_{\mathcal{K}}(\G(F))$ of $\mathcal{K}$ in $\G(F)$.  This is the content of Proposition \ref{descent:prop}.  

Before stating this result 
first observe that every $x \in X$ determines a morphism
\begin{equation}\label{pull:back:aut:1}
\Aut_{f(x)}(H) \xrightarrow{f^*} \Aut_x (f^* H) \xrightarrow{T^*_x(\alpha) \circ ? \circ \alpha^{-1}} \Aut_x(F) 
\end{equation}
of $\Aut(H)$-sets.  In particular, we have $f^{-1}(\K(H)) \subseteq \K(F)$. 

Also if $x$ and $y$ are elements of $X$ then the diagram
\begin{scriptsize}
\begin{equation}\label{pull:back:aut:2}
\xymatrixcolsep{7pc}\xymatrix{
\Aut_{f(y)}(H) \times \Aut_{f(x)}(H) \ar[r]^-{f^* \times f^*} \ar[d]^-{*} & \Aut_y(f^* H) \times \Aut_x(f^* H) \ar[r]^-{T^*_y(\alpha) \circ ? \circ \alpha^{-1} \times T^*_x(\alpha) \circ ? \circ \alpha^{-1}} \ar[d]^-{*} & \Aut_y(F) \times \Aut_x(F) \ar[d]^-{*} \\
\Aut_{f(x) + f(y)}(H) \ar[r]^-{f^*} & \Aut_{x + y}(f^* H) \ar[r]^-{T^*_{x + y}(\alpha) \circ ? \circ \alpha^{-1}} & \Aut_{x + y}(F) 
}
\end{equation}
\end{scriptsize}
of $\Aut(H)$-sets commutes.

Proposition \ref{descent:prop}, which generalizes \cite[Proposition 2, p. 291]{MumI}, can now be stated.  

\begin{proposition}[Compare with {\cite[Proposition 2, p. 291]{MumI}}]
\label{descent:prop}  
The following assertions hold:
\begin{enumerate}
\item{if $\operatorname{C}_{\mathcal{K}}(\G(F))$ denotes the centralizer of $\mathcal{K}$ in $\G(F)$ then $\operatorname{C}_{\mathcal{K}}(\G(F))$ coincides with the set
$$ \{ (x,\eta) : f(x) \in \K(H) \text{ and } \eta = T^*_x(\alpha) \circ f^* \psi \circ \alpha^{-1} \text{ for some $\psi \in \Aut_{f(x)}(H)$} \}\text{;}$$ }
\item{the map $\operatorname{C}_{\mathcal{K}}(\G(F)) \rightarrow \G(H)$ defined by $(x, \eta) \mapsto (f(x),\psi)$, where $\psi$ is the (unique) element of $\Aut_{f(x)}(H)$ whose pullback is $\eta$, is a surjective group homomorphism with kernel equal to $\mathcal{K}$.
}
\end{enumerate}
\end{proposition}

\begin{proof}
We first prove (a). 
If $w \in K$ then let $\phi_w \in \Aut_w(F)$ be the unique isomorphism $F \rightarrow T^*_w F$ having the property that $(w,\phi_w) \in \mathcal{K}$.

We know that $(F, \phi)$ descends to $H$ and that if $x \in X$ and $y = f(x)$ then $(T^*_x F, T^*_x \phi)$ descends to $T^*_y H$. 

The centralizer of $\mathcal{K}$ in $\G(F)$ consists exactly of those $(x,\psi) \in \G(F)$ with the property that $\psi *  \phi_w = \phi_w * \psi$ for all $w \in K$.  
Considering this condition we conclude that $\operatorname{C}_{\mathcal{K}}(\G(F))$ consists exactly of those $(x,\psi) \in \G(F)$ such that $\psi$ determines an isomorphism amongst the pairs $(F,\phi) $ and $(T^*_x F,T^*_x \phi)$.

Thus, to determine $\operatorname{C}_{\mathcal{K}}(\G(F))$, we need to examine, for a fixed $x \in \K(F)$, those isomorphisms $\psi : F \rightarrow T^*_x F$ which commute with the descent data.

Let $x \in X$ and let $y = f(x)$.  
The map
\begin{equation}\label{pull:back:aut:3} \Hom_{\Osh_Y}(H, T^*_y H) \rightarrow \Hom((F,\phi),(T^*_x F, T^*_x \phi))
\end{equation}
is given by 
$\eta \mapsto T^*_x(\alpha) \circ f^* \eta \circ \alpha^{-1}$ and is an isomorphism (since the descent functor is fully faithful).  

Furthermore under the map \eqref{pull:back:aut:3} isomorphisms carry over to isomorphisms.  In particular if $\psi : F \rightarrow  T^*_x F$ is an isomorphism which commutes with descent data then $f(x)$ is an element of $ \K(H)$.    Considering the discussion above we conclude that (a) holds.

To prove (b), using the diagram \eqref{pull:back:aut:2}, we check that the asserted map is a group homomorphism.  To see that the asserted map is surjective let $y \in \K(H)$.   Then $y = f(x)$ for some $x \in \K(F)$.  Since the map \eqref{pull:back:aut:3} is an isomorphism, every element of $\Aut_y(H)$ is in the image.  
Using the definition of the map we check that its kernel is $\mathcal{K}$.
\end{proof}

\noindent
{\bf Remark.}  Using the fact that $\mathcal{K}$ is a level subgroup of $\G(F)$ we can check that the centralizer of $\mathcal{K}$ in $\G(F)$ equals its normalizer.

\section{Non-degenerate theta groups}

Let $X$ be an abelian variety defined over an algebraically closed field $k$.  
Let $E$ be a simple semi-homogeneous vector bundle of separable type on $X$.  In \S \ref{theta:loc:free} we prove that its theta group $\G(E)$ is a non-degenerate central extension of $k^\times$ by $\K(E)$.

\subsection{Preliminaries on central extensions of $k^\times$ by a finite abelian group}\label{non-deg:central:extensions}  
Let $\K$ be a finite abelian group and assume that the order of $\K$ is not divisible by the characteristic of $k$.  Here we recall some facts about central extensions
\begin{equation}\label{basic:central}
1 \rightarrow k^\times \xrightarrow{\iota} \G \xrightarrow{\pi} \K \rightarrow 0
\end{equation}
of $k^\times$ by $\K$.

To begin with the extension \eqref{basic:central} is said to be \emph{non-degenerate} if $\iota(k^\times)$ equals the center of $\G$.  In addition we say that a subgroup $\mathcal{K} \subseteq \G$ is a \emph{level subgroup} if the homomorphism $\pi : \mathcal{K} \rightarrow \pi(\mathcal{K})$ is injective.

Important to the structure of the extension \eqref{basic:central} is the commutator of $\G$ which determines a bilinear form 
\begin{equation}\label{commutator:bilinear}
 [-,-]_{\G} \colon \K \times \K \rightarrow k^\times \text{, defined by 
$[x,y]_{\G} := \tilde{x} \tilde{y} \tilde{x}^{-1} \tilde{y}^{-1}$,}\end{equation} 
where $\tilde{x}$ and $\tilde{y}$ are any elements of $\G$ lying over $x$ and $y$ respectively.
 
It is clear that the form \eqref{commutator:bilinear} is skew symmetric.    
Also, as explained in \cite[p. 293]{MumI}, the bilinear form $[-,-]_{\G}$ is related to level subgroups of $\G$.   More specifically if $K \subseteq \K$ is a subgroup having the property that $[x,y]_{\G} = 1$ for all $x,y \in K$, then $\G$ admits a level subgroup $\mathcal{K}$  with the property that $\pi(\mathcal{K}) = K$.
Finally if the extension \eqref{basic:central} is non-degenerate, then the form \eqref{commutator:bilinear} is also non-degenerate; this fact has the following important consequence.


\begin{proposition}[{\cite[p. 293-294]{MumI}}]\label{normal:form:theta}
If $\G$ is a non-degenerate central extension of $k^\times$ by $\K$ then $\K$ admits subgroups $K_1,K_2 \subseteq \K$ with the properties that
\begin{enumerate}
\item{$\K = K_1\oplus K_2$ and $[x_i,y_i]_{\G} =1$ if $x_i,y_i \in K_i$, for $i=1,2$;}
\item{the bilinear form 
$$\langle -,-\rangle \colon K_1\times K_2\rightarrow k^\times \text{, defined by $(x_1,x_2)\mapsto [x_1, x_2]_{\G}$}$$ is non-degenerate;} 
\item{ the extension $\G$ is equivalent to the extension $\G_{\langle -,- \rangle}$, where
$\G_{\langle -,-\rangle} := k^\times \times K_1\oplus K_2$ and where multiplication is defined by 
$$(\alpha,x_1,x_2)\cdot (\beta, y_1,y_2)=(\alpha \beta \langle x_1,y_2 \rangle, x_1+y_1,x_2+y_2)\text{.}$$ 
}
\end{enumerate} 
\end{proposition}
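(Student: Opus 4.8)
The plan is to deduce (a) and (b) directly from Proposition~\ref{normal:bilinear} and then to construct the equivalence in (c) by hand, using homomorphic sections furnished by Lemma~\ref{level-subgroup-lemma}. Since $\G$ is non-degenerate, the commutator form $[-,-]_{\G}$ of \eqref{commutator:bilinear} is a non-degenerate skew-symmetric bilinear form on $\K$, so Proposition~\ref{normal:bilinear} applied to it produces subgroups $K_1,K_2\subseteq\K$ for which (a) and (b) hold. It then remains only to prove (c) for this choice of $K_1$ and $K_2$.

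For $i=1,2$ the subgroup $K_i$ is isotropic for $[-,-]_{\G}$ by (a), and $\#K_i$ divides $\#\K$, hence is prime to the characteristic of $k$; Lemma~\ref{level-subgroup-lemma} applied to $K_i$ therefore yields a level subgroup $\mathcal{K}_i\subseteq\G$ with $\pi(\mathcal{K}_i)=K_i$. The inverse of the isomorphism $\pi\mid\mathcal{K}_i\colon\mathcal{K}_i\xrightarrow{\ \sim\ }K_i$ is then a group homomorphism $s_i\colon K_i\rightarrow\G$ splitting $\pi$ over $K_i$. I would then define
\[\Phi\colon\G_{\langle-,-\rangle}\longrightarrow\G,\qquad \Phi(\alpha,x_1,x_2):=\iota(\alpha)\,s_2(x_2)\,s_1(x_1)\text{,}\]
and check that it is a group homomorphism. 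Since $\iota(k^\times)$ is central and $s_1,s_2$ are homomorphisms,
\[\Phi(\alpha,x_1,x_2)\,\Phi(\beta,y_1,y_2)=\iota(\alpha\beta)\,s_2(x_2)\,s_1(x_1)\,s_2(y_2)\,s_1(y_1)\text{,}\]
while \eqref{commutator:bilinear} gives $s_1(x_1)\,s_2(y_2)=\iota\bigl([x_1,y_2]_{\G}\bigr)\,s_2(y_2)\,s_1(x_1)$ with $[x_1,y_2]_{\G}=\langle x_1,y_2\rangle$. Substituting and collecting the central factors yields $\Phi\bigl(\alpha\beta\langle x_1,y_2\rangle,\,x_1+y_1,\,x_2+y_2\bigr)$, which is precisely the product defining $\G_{\langle-,-\rangle}$.

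Finally, $\Phi$ restricts to the identity on $k^\times$ and satisfies $\pi\circ\Phi(\alpha,x_1,x_2)=x_1+x_2$, so $\Phi$ is a morphism of central extensions of $k^\times$ by $\K$ over the identity of both $k^\times$ and $\K$; the short five lemma (or a direct cardinality count, using $\K=K_1\oplus K_2$) shows $\Phi$ is an isomorphism, which is the equivalence claimed in (c). The one place requiring care is the bookkeeping in the displayed computation: one must take the factors of $\Phi$ in the order $s_2(x_2)\,s_1(x_1)$ and track the direction of the commutator so that the resulting cocycle comes out as exactly $\langle x_1,y_2\rangle$, with the correct arguments and no inverse. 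Aside from this, the argument is a formal consequence of Proposition~\ref{normal:bilinear} and Lemma~\ref{level-subgroup-lemma}, so I expect the write-up to consist essentially of this single commutator manipulation together with the verification that $\Phi$ is bijective.
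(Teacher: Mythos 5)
Your proposal is correct and follows essentially the same route as the paper: (a) and (b) from Proposition~\ref{normal:bilinear}, homomorphic sections over $K_1$ and $K_2$ from Lemma~\ref{level-subgroup-lemma}, and the section taken in the order $s_2(x_2)s_1(x_1)$ so that the single commutator manipulation produces the cocycle $\langle x_1,y_2\rangle$. The only difference is presentational: you package the argument as an explicit morphism of extensions $\Phi$ checked via the short five lemma, whereas the paper computes the factor set of the set-theoretic section $\sigma(\mathbf{x})=\sigma_2(x_2)\sigma_1(x_1)$ directly; these are the same computation.
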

\begin{proof}
Use the arguments described in \cite[p. 293-294]{MumI}.
\end{proof}

One consequence of Proposition \ref{normal:form:theta} is that we can associate a sequence of integers to  every non-degenerate theta group.  Indeed, suppose that $\G$ is a non-degenerate central extension of $k^\times$ by $\K$.  If $\G$ is abelian then $\K$ is trivial and we say that $\G$ is of \emph{type} $(1)$.  Otherwise, by Proposition \ref{normal:form:theta}, the invariant factors of $\K$ occur in pairs. In this case, let 
$(d_1,\dots, d_p)$ be a sequence of positive integers with the properties that:
\begin{enumerate}
\item{ $d_{i + 1} | d_i$, $d_i > 1$;}
\item{$(d_1,d_1, \dots, d_p,d_p)$ are the invariant factors of $\K$.}
\end{enumerate}  We then say that $\G$ is of \emph{type} $(d_1,\dots, d_p)$.

\subsection{Non-degenerate theta groups and vector bundles}\label{theta:loc:free}
Let $X$ be an abelian variety defined over an algebraically closed field $k$ and let $E$ be a simple vector bundle on $X$.  

We have homomorphisms 
$\text{
$\iota_E : k^\times \rightarrow \G(E)$ and $\pi_E : \G(E) \rightarrow \K(E)$}$ defined, respectively, by $\alpha \mapsto (0,\alpha \id_E)$ and $(x,\phi) \mapsto x$.  These homomorphisms determine a short exact sequence of groups 
$$1 \rightarrow k^\times \xrightarrow{\iota_E}  \G(E) \xrightarrow{\pi_E} \K(E) \rightarrow 0 \text{.}$$
Since the image of $\iota_E$ is contained in the center of $\G(E)$, the group $\G(E)$ is a central extension of $k^\times$ by $\K(E)$.

Now suppose that $E$ is a simple semi-homogeneous vector bundle of separable type on $X$.   Then $\K(E)$ is a finite group and $\chi(E)^2 = \#\K(E)$, \cite[\S 6, 7, and Corollary 7.9, p. 271]{Muk78}.  

Our goal here is to prove Theorem \ref{non-degen-vb} which implies that $\G(E)$ is a non-degenerate theta group.  Before doing so let us establish one additional lemma.

\begin{lemma}\label{simple:homog:lemma}
Let $f : X \rightarrow Y$ be a separable isogeny and let $E$ be a simple semi-homogeneous vector bundle of separable type on $X$.
If $E$ descends, via $f$, to a vector bundle $F$ on $Y$, then $F$ is a simple semi-homogeneous vector bundle of separable type on $Y$.  
\end{lemma}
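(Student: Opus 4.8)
The plan is to verify, one at a time, the three conditions in the definition of ``simple semi-homogeneous vector bundle of separable type'' for the bundle $F$ on $Y$, using throughout that $E\cong f^*F$ and that $f$ is a separable \emph{homomorphism} of abelian varieties. For \emph{simplicity}: choosing an isomorphism $\alpha\colon f^*F\xrightarrow{\sim}E$ and transporting the canonical descent datum $\operatorname{can}(F)$ along $\alpha$ exhibits $E$ as a coherent sheaf descending, via $f$, to $F$, so Lemma~\ref{decent:lemma} gives $1=\dim_k\End_{\Osh_X}(E)\ge\dim_k\End_{\Osh_Y}(F)$, and since the latter is $\ge 1$ it equals $1$. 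In particular $F$ is indecomposable; hence so is every twist $F\otimes M$ by a line bundle $M$ on $Y$ and every translate $T_y^*F$ (indeed all of these are again simple), a fact I will use below.

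For \emph{separable type} I first record that $\chi(E)=\deg(f)\cdot\chi(F)$. Since $f$ is finite, $\chi(E)=\chi(f^*F)=\chi(f_*f^*F)=\chi(F\otimes f_*\Osh_X)$ by the projection formula; and, $f$ being separable, $f_*\Osh_X$ is a direct sum of the $\deg(f)$ line bundles in $\ker(\hat f\colon\hat Y\to\hat X)\subseteq\Pic^0(Y)$, so, as the Euler characteristic is invariant under algebraic equivalence, each summand contributes $\chi(F)$. Consequently $\chi(F)\ne 0$ (because $\chi(E)\ne 0$ and $\deg(f)\ne 0$), so $F$ is non-degenerate; moreover $\chi(F)$ divides $\chi(E)$, so $\chi(F)$ is not divisible by $\operatorname{char}(k)$ (automatic in characteristic $0$, and in positive characteristic immediate from the same property of $\chi(E)$). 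Thus $F$ is of separable type.

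For \emph{semi-homogeneity}, fix $y\in Y$ and pick $x\in X$ with $f(x)=y$. Since $f$ is a homomorphism, $f\circ T_x=T_y\circ f$, hence $T_x^*E=T_x^*f^*F=f^*T_y^*F$. By semi-homogeneity of $E$ there is a line bundle $L$ on $X$ with $T_x^*E\cong E\otimes L$; comparing classes in the torsion-free group $\NS(X)$, on which translations act trivially, forces $r\,c_1(L)=0$ (with $r$ the rank of $E$), so $c_1(L)=0$, i.e.\ $L\in\Pic^0(X)$. The dual isogeny $\hat f\colon\hat Y\to\hat X$ is surjective, so $f^*\colon\Pic^0(Y)\to\Pic^0(X)$ is surjective and $L=f^*M$ for some $M\in\Pic^0(Y)$. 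Therefore $f^*T_y^*F\cong f^*F\otimes f^*M=f^*(F\otimes M)$; applying $f_*$ and the projection formula gives $T_y^*F\otimes f_*\Osh_X\cong(F\otimes M)\otimes f_*\Osh_X$. Decomposing $f_*\Osh_X$ into line bundles $\bigoplus_{i=1}^{\deg(f)}P_i$ with $P_1=\Osh_Y$, both sides become direct sums of indecomposable (in fact simple) bundles, so by the Krull--Schmidt theorem for vector bundles on a projective variety the summand $T_y^*F$ of the left side is isomorphic to $(F\otimes M)\otimes P_{i_0}$ for some $i_0$. Hence $T_y^*F\cong F\otimes(M\otimes P_{i_0})$ with $M\otimes P_{i_0}\in\Pic(Y)$, so $F$ is semi-homogeneous.

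The step I expect to be the main obstacle is semi-homogeneity: pulling back along $f$ produces an isomorphism immediately, but $f^*$ does not reflect isomorphisms of vector bundles, so one needs a descent-type argument — packaged here through the projection formula together with Krull--Schmidt — to recover the desired isomorphism on $Y$ up to a line-bundle twist. The remaining inputs (surjectivity of the dual isogeny, the decomposition of $f_*\Osh_X$ into line bundles in $\Pic^0(Y)$, and invariance of $\chi$ under algebraic equivalence) are standard.
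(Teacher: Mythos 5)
Your simplicity step is exactly the paper's: transport the descent datum and apply Lemma~\ref{decent:lemma} to get $1=\dim_k\End_{\Osh_X}(E)\geq\dim_k\End_{\Osh_Y}(F)\geq 1$. Where you genuinely diverge is in the other two steps. The paper disposes of semi-homogeneity by citing Mukai (\cite[Proposition 5.4]{Muk78}) and simply asserts the identity $\chi(E)=(\#\ker f)\,\chi(F)$, whereas you prove both from scratch: the Euler-characteristic formula via the projection formula and the splitting of $f_*\Osh_X$ into line bundles in $\ker\widehat{f}\subseteq\Pic^0(Y)$, and semi-homogeneity via a pleasant direct argument (torsion-freeness of $\NS(X)$ forces the twisting bundle $L$ into $\Pic^0(X)$, surjectivity of $f^*$ on $\Pic^0$ descends it to $M$, and then projection formula plus Atiyah's Krull--Schmidt theorem recovers $T_y^*F\cong F\otimes(M\otimes P_{i_0})$). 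This is more self-contained than the paper's proof, at the cost of invoking Krull--Schmidt and the structure of $f_*\Osh_X$.

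There is, however, one caveat you should address: separability of $f$ does \emph{not} imply that $\deg f$ is prime to $\operatorname{char}k$. For an ordinary abelian variety in characteristic $p$ the \'etale quotient by a copy of $\ZZ/p\ZZ$ is a separable isogeny of degree $p$; there $\ker\widehat{f}\cong\mu_p$ has a single $k$-point and $f_*\Osh_X$ is an indecomposable iterated self-extension of $\Osh_Y$, not a sum of $\deg(f)$ line bundles. Since you use that splitting twice (for $\chi(E)=\deg(f)\chi(F)$ and in the Krull--Schmidt step), your argument as written has a gap in this regime, and deducing $p\nmid\deg f$ from ``$\chi(F)$ divides $\chi(E)$'' is circular because that divisibility came from the splitting. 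The patch is short: the identity $\chi(f^*F)=\deg(f)\,\chi(F)$ holds for \emph{any} isogeny (Riemann--Roch with trivial Todd class, or the corresponding statement in \cite{Mum}), so separable type of $E$ forces $p\nmid\deg f$, after which $\ker\widehat f$ is \'etale of order $\deg f$, the decomposition of $f_*\Osh_X$ is valid, and the rest of your proof goes through. (Alternatively, one can note that the paper only applies the lemma to quotients by images of level subgroups, whose order is prime to the characteristic by definition.)
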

\begin{proof}

Mukai's theory implies that $F$ is semi-homogeneous, see \cite[Proposition 5.4, p. 259]{Muk78}.  To prove that $F$ is simple, using Lemma \ref{decent:lemma}, we obtain the inequalities
$$ 1 = \dim_k \End_{\Osh_X}(E) \geq \dim_k \End_{\Osh_Y}(F) \geq 1$$ 
and conclude that $F$ is simple.  

On the other hand, we have that
$ \chi(E) = (\# \ker f) \chi(F)$
while 
$\chi(E)^2 = \# \K(E)$ and  $\chi(F)^2 = \# \K(F)\text{.}$  
We conclude that $\chi(F) \not = 0$ and that $\chi(F)$ is not divisible by the characteristic of $k$.
\end{proof}

We now use Lemma \ref{simple:homog:lemma} and Proposition \ref{descent:prop} to prove Theorem \ref{non-degen-vb}.  The proof of this theorem is similar to Mumford's proof of  \cite[Theorem 1, p. 293]{MumI}.

\begin{proof}[Proof of Theorem \ref{non-degen-vb}]
Let $\mathcal{K}$ be a maximal level subgroup of $\G(E)$ and 
let $K := \pi_E(\mathcal{K})$.  Then $K$ is a maximal subgroup of $\K(E)$ on which $[-,-]_{\G(E)} \equiv 1$.  

Let $Y := X / K$, let $f : X \rightarrow Y$ denote the quotient map, and let $F$ be a vector bundle on $Y$ to which $E$ descends.  (Such an $F$ is determined by $\mathcal{K}$.)  Then, by Lemma \ref{simple:homog:lemma}, $F$ is a simple semi-homogeneous vector bundle of separable type on $X$.  

Using the fact that $\mathcal{K}$ is maximal, together with Proposition \ref{descent:prop}, we check that 
$$\G(F) = (k^\times \cdot \mathcal{K} ) / \mathcal{K}$$ and conclude that $\K(F)$ is trivial.  

Since $\chi(F)^2 = \# \K(F)$ we conclude that $\chi( F )^2 = 1$. Since $\chi( E )^2 = (\# K)^2\chi (F)^2$ we conclude that $\chi(E)^2 = (\# K)^2$.  Since $\mathcal{K}$ was an arbitrary maximal level subgroup, we conclude that $\chi(E)^2 = (\# \mathcal{K})^2$ for every maximal level subgroup $\mathcal{K}$ of $\G(E)$.

Now let $K_0 := \{ x \in \K(E) : [x,y]_{\G(E)} = 1 \text{ for all $y \in \K(E)$}\}$.  Then $[-,-]_{\G(E)}$ induces a non-degenerate skew-symmetric bilinear form
$$ \K(E) / K_0 \times \K(E) / K_0 \rightarrow k^\times \text{.}$$  Hence $\# (\K(E) / K_0) = \ell^2$, for some $\ell$, and
there exists a maximal subgroup $K'$ of $\K(E) / K_0$, of order $\ell$, on which $[-,-]_{\G(E)} \equiv 1$.

Let $K$ be the inverse image of $K'$ in $\K(E)$.  Then $K$ is the image of a maximal level subgroup of $\G(E)$.  We conclude that 
$$\chi(E)^2 = (\# K)^2 = (\# K_0)^2 \cdot \ell^2$$ and hence that
$$ \chi( E) ^2  = (\# K_0)^2 \cdot \ell^2 \\
 = (\# K_0)^2 \cdot \#(\K(E) / K_0) \\
 = (\# K_0)\#(\K(E)) \\
 = (\# K_0) \cdot \chi(E)^2 \text{.}
$$
Hence $\# K_0 = 1$ which implies that $K_0$ is trivial.
\end{proof}

\section{The representation theory of non-degenerate theta groups}\label{theta:reps}

Let $\K$ be a finite abelian group and assume that the characteristic of $k$ does not divide the order of $\K$.  Let $\G$ be a non-degenerate central extension of $k^\times$ by $\K$.  In this section we determine the representation theory of $\G$.

Before proceeding we fix some terminology.  A $\G$-module $(V,\rho)$ is always
a finite dimensional $k$-vector space which admits  a basis $\mathcal{B}$ for which there exists Laurent polynomials $F_{i,j} \in k [t,t^{-1}]$ with the property that the matrix representation of $\rho(\alpha)$, $\alpha \in k^\times$, with respect to $\mathcal{B}$ is given by evaluating $F_{i,j}$ at $\alpha$. We say that a $\G$-module $(V, \rho)$ is a \emph{weight $n$ $\G$-module}, for $n \in \ZZ$, if $\rho(\alpha) \cdot v = \alpha^n v$ for all $v \in V$ and all $\alpha \in k^\times$.

In \S \ref{proof:mainTheorem1} we prove the following theorem which we use to establish Theorem \ref{thm:theta:vb:1} and Corollary \ref{unique:theta:irred}, see \S \ref{proof:theta:rep:vb} for more details.

\begin{theorem}\label{mainTheorem1}  Let $\K$ be finite abelian group and assume that the characteristic of $k$ does not divide the order of $\K$.  Let $\G$ be a non-degenerate central extension of $k^\times$ by $\K$ of type  $(d_1,\dots, d_p)$.  
There exists exactly $\gcd(n,d_1)^2 \times \cdots \times \gcd(n,d_p)^2$ non-isomorphic irreducible weight $n$ $\G$-module(s).
A weight $n$ representation is irreducible if and only if it has dimension $\frac{d_1 \times \cdots \times d_p}{\gcd(n,d_1) \times \cdots \times \gcd(n,d_p)}$.
Every weight $n$ $\G$-module decomposes into a direct sum of irreducible weight $n$ $\G$-modules.  Every $\G$-module decomposes into a direct sum of weight $n$ $\G$-modules.
\end{theorem}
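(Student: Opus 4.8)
The plan is to translate the statement into a computation of the Wedderburn decomposition of a finite-dimensional algebra. By Proposition~\ref{normal:form:theta} the extension $\G$ is equivalent to $\G_{\langle-,-\rangle}$, so I may assume $\K=K_1\oplus K_2$ with $[-,-]_{\G}$ the standard alternating pairing; writing $(d_1,\dots,d_p)$ for the type of $\G$ then means $\K\cong(\ZZ/d_1)^2\oplus\cdots\oplus(\ZZ/d_p)^2$ with $[-,-]_{\G}$ the sum of the standard symplectic forms, and $\#\K=(d_1\cdots d_p)^2$. Fix $n\in\ZZ$. Using the normalized section $s(x_1,x_2)=(1,x_1,x_2)$ of the projection $\G\to\K$, a weight $n$ $\G$-module is precisely a module over the twisted group algebra $A_n$ of $\K$ with cocycle $\big((x_1,x_2),(y_1,y_2)\big)\mapsto\langle x_1,y_2\rangle^{\,n}$; that is, $A_n$ has $k$-basis $\{e_x\}_{x\in\K}$ with $e_x e_y=\langle x_1,y_2\rangle^{\,n}e_{x+y}$, isomorphisms of weight $n$ $\G$-modules match isomorphisms of $A_n$-modules, and $\dim_k A_n=\#\K=(d_1\cdots d_p)^2$. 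So the theorem is equivalent to the assertion that $A_n\cong\operatorname{Mat}_\ell(k)^{\oplus m}$ with $\ell=\tfrac{d_1\cdots d_p}{\gcd(n,d_1)\cdots\gcd(n,d_p)}$ and $m=\gcd(n,d_1)^2\cdots\gcd(n,d_p)^2$, together with the statement that every $\G$-module is a sum of weight $n$ submodules.

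First I would check that $A_n$ is semisimple: since $\#\K$ is prime to $\operatorname{char} k$ this is the twisted version of Maschke's theorem, so $A_n\cong\prod_i\operatorname{Mat}_{n_i}(k)$. Next I would identify its centre. Conjugation by $e_x$ scales $e_y$ by $[x,y]_{\G}^{\,n}$, so $Z(A_n)$ is spanned by $\{e_y:y\in\K_n\}$, where $\K_n:=\{y\in\K:[x,y]_{\G}^{\,n}=1\ \text{for all }x\}$ is the radical of the alternating form $[-,-]_{\G}^{\,n}$; thus $Z(A_n)$ is the (commutative, because $[-,-]_{\G}^{\,n}$ vanishes on $\K_n$) twisted group algebra of $\K_n$, hence is semisimple of dimension $\#\K_n$ and so $Z(A_n)\cong k^{\#\K_n}$. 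In particular $A_n$ has exactly $\#\K_n$ Wedderburn factors. A direct computation in the symplectic model gives $\K_n=\bigoplus_i\big(\tfrac{d_i}{\gcd(n,d_i)}\ZZ/d_i\ZZ\big)^2$, so $\#\K_n=\gcd(n,d_1)^2\cdots\gcd(n,d_p)^2$, and $[-,-]_{\G}^{\,n}$ is non-degenerate on $\K/\K_n$, whose order is $\ell^2$.

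The main point — the avatar of Stone–von Neumann here — is to show all the $n_i$ are equal, for then $\sum_i n_i^2=\#\K=\ell^2\cdot\#\K_n$ forces each $n_i=\ell$. For this I use the torus $\widehat\K=\Hom(\K,k^\times)$, which acts on $A_n$ by the algebra automorphisms $\psi\cdot e_x=\psi(x)e_x$ and therefore permutes the Wedderburn factors, equivalently permutes the minimal idempotents of $Z(A_n)$. The set of $k$-algebra homomorphisms $Z(A_n)\to k$ is a torsor under $\widehat{\K_n}$, and $\widehat\K$ acts on it through the restriction map $\widehat\K\to\widehat{\K_n}$; this map is surjective because $k^\times$ is divisible. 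Hence $\widehat\K$ acts transitively on the $\#\K_n$ minimal idempotents of $Z(A_n)$, so on the Wedderburn factors, and since an algebra automorphism preserves the size of a factor all $n_i$ coincide. (Alternatively one can run Mumford's orbit argument directly on a weight $n$ irreducible, using the operators $\rho(1,x_1,0)$ and $\rho(1,0,x_2)$.) I expect this transitivity, together with the identification of $Z(A_n)$, to be the crux; the rest is bookkeeping with the symplectic form.

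Finally I would read off the conclusions. From $A_n\cong\operatorname{Mat}_\ell(k)^{\oplus m}$ there are exactly $m=\gcd(n,d_1)^2\cdots\gcd(n,d_p)^2$ isomorphism classes of simple $A_n$-modules, i.e.\ of irreducible weight $n$ $\G$-modules, each of dimension $\ell=\tfrac{d_1\cdots d_p}{\gcd(n,d_1)\cdots\gcd(n,d_p)}$; semisimplicity of $A_n$ gives that every weight $n$ $\G$-module is a direct sum of these, whence a weight $n$ module is irreducible exactly when its dimension is $\ell$. For the last assertion, let $(V,\rho)$ be any $\G$-module: the hypothesis that the matrix entries of $\rho(\alpha)$ in a fixed basis are Laurent polynomials means that $\rho|_{k^\times}$ extends to a homomorphism of algebraic groups $\mathbb{G}_{\mathrm m}\to\operatorname{GL}(V)$, which is diagonalizable, giving $V=\bigoplus_n V_n$ with $\rho(\alpha)|_{V_n}=\alpha^n$; each $V_n$ is a $\G$-submodule since $\iota(k^\times)$ is central in $\G$. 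This finishes the proof.
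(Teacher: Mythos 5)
Your proof is correct, but it takes a genuinely different route from the paper's. After the common reduction to $\G_{\langle-,-\rangle}$ via Proposition \ref{normal:form:theta}, the paper argues ``by hand'': it decomposes an arbitrary weight $n$ module under $K_1$ and under $\ker\pi_n$, shows every nonzero weight $n$ module contains a $D_n$-dimensional submodule (Proposition \ref{key:Proposition}), gets complete reducibility by averaging over the finite subgroup $G'$, and then writes down explicit models $(W_{y,\chi},\rho^{\sigma,n}_{y,\chi})$ and classifies them up to isomorphism to obtain the count. You instead identify weight $n$ $\G$-modules with modules over the twisted group algebra $A_n$ of $\K$ with cocycle $\langle x_1,y_2\rangle^{n}$, invoke twisted Maschke for semisimplicity, compute $Z(A_n)$ as the span of the $e_y$ with $y$ in the radical $\K_n$ of $[-,-]_{\G}^{\,n}$ (so the number of Wedderburn blocks is $\#\K_n=\gcd(n,d_1)^2\cdots\gcd(n,d_p)^2$), and force all blocks to have equal size by letting $\widehat{\K}=\Hom(\K,k^\times)$ act on $A_n$ and hence transitively on the algebra homomorphisms $Z(A_n)\to k$ --- a Stone--von Neumann style argument; the delicate points here (the torsor property, which uses invertibility of the $e_y$, and surjectivity of the restriction $\widehat{\K}\to\widehat{\K_n}$ by divisibility of $k^\times$) all check out, as does the computation of $\K_n$ in the symplectic model. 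The final assertion is handled essentially as in the paper: the Laurent-polynomial hypothesis makes $\rho\mid_{k^\times}$ an algebraic representation of $\mathbb{G}_m$, hence diagonalizable, and centrality of $\iota(k^\times)$ makes each weight space $\G$-stable. As for what each approach buys: yours is shorter and more structural, extracting the number and common dimension of the irreducibles simultaneously from the Wedderburn decomposition without exhibiting a single irreducible; the paper's explicit models $W_{y,\chi}$, by contrast, are reused later (the induced-module interpretation in \S\ref{induced:reps} and the remark on $G'$), which your argument does not supply directly.
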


To determine the representation theory of $\G$, considering Proposition \ref{normal:form:theta}, it suffices to determine the representation theory of the group $\G_{\langle - , - \rangle}$.  In what follows we omit the subscript $\langle - , - \rangle$ and denote $\G_{\langle - , - \rangle}$ simply by $\G$. 

Let $(d_1,\dots, d_p)$ be the type of $\G$ and define
$D_n := \frac{d_1 \times \cdots \times d_p}{\gcd(n,d_1) \times \cdots \times \gcd(n,d_p)}$, for each $n \in \ZZ$.

\subsection{Auxiliary results and proof of Theorem \ref{mainTheorem1}}\label{proof:mainTheorem1}  Our proof of Theorem \ref{mainTheorem1} is similar to Mumford's proof of \cite[Proposition 3, p. 295]{MumI}, which determines the weight $1$-representation theory of $\G$.  
Central to our proof of Theorem \ref{mainTheorem1} is:

\begin{proposition}\label{key:Proposition}
If $(V, \rho)$ is a nonzero weight $n$ $\G$-module, then $(V,\rho)$ admits a $D_n$-dimensional submodule.
\end{proposition}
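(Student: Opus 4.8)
The plan is to construct the desired $D_n$-dimensional submodule explicitly using the structure of $\G = \G_{\langle - , - \rangle}$ provided by Proposition \ref{normal:form:theta}, namely $\G = k^\times \times K_1 \oplus K_2$ with multiplication twisted by the nondegenerate pairing $\langle -, - \rangle \colon K_1 \times K_2 \to k^\times$. First I would exploit the fact that $K_1$ (being abelian with $\iota(k^\times)$ acting by scalars) lifts to a \emph{maximal} commutative-type subgroup: consider the preimage $\widetilde{K_1} := \iota(k^\times) \cdot \mathcal{K}_1 \subseteq \G$, where $\mathcal{K}_1$ is a level subgroup lying over $K_1$. Since $[x_1, y_1]_{\G} = 1$ for $x_1, y_1 \in K_1$, the group $\widetilde{K_1}$ is abelian, and I claim it is a maximal abelian subgroup of $\G$ — this follows from nondegeneracy of $\langle -, - \rangle$, since any element of $\G$ centralizing all of $\mathcal{K}_1$ must project into the left kernel of $\langle -, - \rangle$ on the $K_2$-side, which is trivial.

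Next I would analyze a weight $n$ module $(V, \rho)$ restricted to $\widetilde{K_1}$. On this abelian group, $V$ decomposes into a direct sum of eigenspaces indexed by characters of $\widetilde{K_1}$ that restrict to $\alpha \mapsto \alpha^n$ on $\iota(k^\times)$; equivalently, characters of $\mathcal{K}_1 / (\mathcal{K}_1 \cap \ker)$ after fixing the weight. Pick a nonzero eigenvector $v$ for some such character $\chi$, and let $W := \Span\{ \rho(g) v : g \in \mathcal{K}_2 \}$, where $\mathcal{K}_2$ is a level subgroup over $K_2$. The commutation relation $\rho(g_2)\rho(g_1) = \langle \pi(g_1), \pi(g_2) \rangle^{-1} \rho(g_1)\rho(g_2)$ (read off from the multiplication law in Proposition \ref{normal:form:theta}) shows that $\rho(g_1)$ acts on $\rho(g_2) v$ by the twisted character $\chi(g_1) \langle \pi(g_1), \pi(g_2)\rangle^{-1}$; hence the vectors $\rho(g_2) v$, as $\pi(g_2)$ ranges over $K_2$, are $\widetilde{K_1}$-eigenvectors for \emph{distinct} characters precisely when $\langle -, \pi(g_2) \rangle$ distinguishes cosets — but $K_2$ acts on itself only through $K_2 / (\text{annihilator of the weight-$n$ condition})$. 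The key computation is that the relevant quotient of $K_2$ through which $\mathcal{K}_2$ acts on a weight $n$ module has order exactly $\prod_i \big( d_i / \gcd(n, d_i) \big) = D_n$: the central character forces $\rho(g_2)^{d_i}$ (for a generator of order $d_i$) to be a fixed scalar depending on $n$, cutting the effective order of each cyclic factor from $d_i$ down to $d_i / \gcd(n, d_i)$.

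Therefore $W$ is spanned by at most $D_n$ eigenvectors; I would then check it is exactly $D_n$-dimensional by verifying these eigenvectors are nonzero (each $\rho(g_2)$ is invertible) and that their characters are pairwise distinct (nondegeneracy of $\langle -, - \rangle$ descends to a perfect pairing on the two $D_n$-order quotients of $K_1$ and $K_2$). Finally, $W$ is visibly $\mathcal{K}_2$-stable by construction and $\widetilde{K_1}$-stable since it is a sum of $\widetilde{K_1}$-eigenspaces; as $\G$ is generated by $\iota(k^\times)$, $\mathcal{K}_1$, and $\mathcal{K}_2$, the subspace $W$ is a $\G$-submodule of dimension $D_n$. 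The main obstacle I anticipate is the bookkeeping in the second step: one must be careful that the weight $n$ condition interacts correctly with each elementary divisor $d_i$ separately — the scalar $\alpha^n$ on $\iota(k^\times)$ constrains $\rho(z_i)^{d_i}$ where $z_i \in \mathcal{K}_2$ has order $d_i$, and tracking that this yields $\gcd(n,d_i)$-many admissible values per factor (hence the product $D_n$ overall, and also the count $\prod \gcd(n,d_i)^2$ of irreducibles later) requires a clean elementary-divisor decomposition of the pairing, which is exactly what Proposition \ref{normal:bilinear} supplies.
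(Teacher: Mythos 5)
There is a genuine gap in your dimension count. Your candidate submodule is $W=\Span_k\{\rho(g)v : g\in\mathcal{K}_2\}$, where $v$ is only an eigenvector for $\widetilde{K_1}=\iota(k^\times)\cdot\mathcal{K}_1$. On a weight $n$ module the central commutator $\iota(\langle\pi(g_1),\pi(g_2)\rangle)$ acts by the \emph{$n$-th power} of that scalar, so acting by $g_2\in\mathcal{K}_2$ with $\pi(g_2)=w$ shifts the $\mathcal{K}_1$-character by $\langle -, nw\rangle$, not by $\langle -, w\rangle$ as in the commutation relation you wrote. Consequently the vectors $\rho(g_2)v$ realize only $\#\,\image\pi_n=D_n$ distinct $\widetilde{K_1}$-characters, but two elements $g_2,g_2'$ with $\pi(g_2)-\pi(g_2')\in\ker\pi_n$ produce vectors in the \emph{same} eigenspace, and these need not be proportional; hence $\dim W$ can be anywhere between $D_n$ and $\#K_2$. (Concretely, for $n=0$ take $V$ to be the regular representation of $K_2$, with $k^\times$ and $K_1$ acting trivially, and $v$ a basis vector: then $W=V$ has dimension $\#K_2$ while $D_0=1$.) Your claim that the characters are pairwise distinct holds only when $\gcd(n,d_i)=1$ for every $i$, e.g.\ $n=1$. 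Relatedly, the mechanism you offer for the reduction by $\gcd(n,d_i)$ is not correct: for $g_2=(1,0,z)$ with $z$ of order $d_i$ one has $g_2^{d_i}=(1,0,0)$ in $\G$, so $\rho(g_2)^{d_i}=\id$ independently of $n$; the $n$-dependence enters only through the conjugation twist just described.

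The missing idea --- and the route the paper takes --- is to refine the choice of $v$. Since the elements $(1,0,w)$ with $w\in\ker\pi_n$ preserve each $K_1$-weight space (their twist $\langle -, nw\rangle$ is trivial), each such weight space decomposes further under $\ker\pi_n$, and one chooses $v$ to be a simultaneous eigenvector for $\mathcal{K}_1$ and for $\{(1,0,w): w\in\ker\pi_n\}$, with $\ker\pi_n$-character $\chi$. Translating this $v$ only by $(1,0,\sigma(z))$, where $\sigma$ is a set-theoretic section of $\pi_n$ and $z$ runs over $\image\pi_n$, gives exactly $D_n$ vectors lying in distinct $K_1$-weight spaces, and stability of their span under a general $(\alpha,x,w)\in\G$ follows because $w+\sigma(z)-\sigma(nw+z)\in\ker\pi_n$ acts on $v$ by the scalar $\chi(w+\sigma(z)-\sigma(nw+z))$. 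Without this refinement your span is indeed a $\G$-submodule, but it need not have dimension $D_n$, so the proposition as you argue it is not established for $\gcd(n,d_i)>1$.
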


\begin{proof}
Let $(V,\rho)$ be a weight $n$ $\G$-module.  An element $x$ of $K_1$ acts on a vector $v \in V$ by the rule $x \cdot v := \rho((1,x,0))(v)$.  We denote the resulting $K_1$-module by $\Res^{\G}_{K_1}(V)$.  

Since $K_2 = \Hom_\ZZ(K_1,k^\times)$ the $K_1$-module $\Res^{\G}_{K_1}(V)$ admits a decomposition into eigenspaces.  Explicitly, we have 
\begin{equation}\label{AA}
\Res^{\G}_{K_1}(V) = \bigoplus_{y \in K_2} V_y
\end{equation}
 and if $x \in K_1$, $y \in K_2$, and $v \in V_y$, then $x \cdot v = \langle x ,y \rangle v$.

Observe now that, if $y \in K_2$, $v \in V_y$, and $(\alpha,x,w) \in \G$, then 
\begin{equation}\label{key:equation}
\rho((\alpha,x,w))(v) \in V_{y+nw}\text{.}
\end{equation}

Let $\pi_n \colon K_2 \rightarrow K_2$ denote the group homomorphism defined by $y \mapsto ny$.  The image of $\pi_n$ has order $D_n$. 

Using \eqref{key:equation}, we see that every  $V_y$, $y \in K_2$, appearing in the decomposition \eqref{AA}, is stable under the (evident) action of $\ker \pi_n$.  As a consequence if $y \in K_2$ then the $\ker \pi_n$-module $V_y$ decomposes into eigenspaces

\begin{equation}\label{BB}
V_y = \bigoplus_{\chi \in \Hom_{\ZZ}(\ker \pi_n, k^\times)} V_{y,\chi} \text{.}
\end{equation}

Let $\sigma$ be a set-theoretic section of the surjective homomorphism $K_2 \rightarrow \image \pi_n$ induced by $\pi_n$.
If $z \in \image \pi_n$, $y \in K_2$, $\chi \in \Hom_\ZZ(\ker \pi_n, k^\times)$, and $s_{y,\chi} \in V_{y,\chi}$, then define 
$$ s_{y,\chi}(z) := \rho((1,0,\sigma(z)))(s_{y,\chi})\text{.}$$
Observe now that 
\begin{equation}\label{CCC}
\rho((\alpha,x,w))(s_{y,\chi}(z)) = \alpha^n \langle x, y+z \rangle \chi(w + \sigma(z) - \sigma(nw + z)) s_{y,\chi}(nw + z)
\end{equation}
for all $(\alpha,x,w) \in \G$.

Since $(V,\rho)$ is nonzero, there exists $y \in K_2$ and $\chi \in \Hom_\ZZ(\ker \pi_n, k^\times)$ such that $V_{y, \chi} \not = 0$.  Fix such a pair $(y,\chi)$, choose a nonzero vector $s_{y,\chi} \in V_{y,\chi}$, and define 
$$W^{\sigma}_{y,\chi} := \operatorname{span}_{k}\{s_{y,\chi}(z) \}_{z \in \image \pi_n} \text{.}$$  Using equation \eqref{CCC}, we see that $W^\sigma_{y,\chi}$ is a $D_n$-dimensional $\G$-submodule of $V$.
\end{proof}

\begin{corollary}\label{cor:key:prop}
\noindent
\begin{enumerate}
\item{A weight $n$ $\G$-module is irreducible if and only if it has dimension $D_n$.}
\item{Every weight $n$ $\G$-module decomposes into irreducible weight $n$ $\G$-modules.}
\end{enumerate}
\end{corollary}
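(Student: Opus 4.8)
The plan is to deduce Corollary \ref{cor:key:prop} directly from Proposition \ref{key:Proposition} together with the group-theoretic structure theory already in hand. First I would prove part (a). Suppose $(V,\rho)$ is an irreducible weight $n$ $\G$-module. By Proposition \ref{key:Proposition} it contains a $D_n$-dimensional submodule; since it is irreducible and nonzero, that submodule is all of $V$, so $\dim_k V = D_n$. Conversely, suppose $(V,\rho)$ is a weight $n$ $\G$-module of dimension $D_n$. If it were not irreducible it would contain a proper nonzero submodule $W$; but $W$ is itself a nonzero weight $n$ $\G$-module (weight is inherited by submodules since $\iota(k^\times)$ acts by the scalar $\alpha^n$ throughout $V$), so by Proposition \ref{key:Proposition} $\dim_k W \geq D_n = \dim_k V$, forcing $W = V$, a contradiction. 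Hence $(V,\rho)$ is irreducible.

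Next I would prove part (b). Let $(V,\rho)$ be an arbitrary nonzero weight $n$ $\G$-module. I would argue by induction on $\dim_k V$. By Proposition \ref{key:Proposition}, $V$ contains a $D_n$-dimensional submodule $W$, which is irreducible by part (a). I would then want to split off $W$ as a direct summand. The cleanest route is complete reducibility: because the order of $\K$ is prime to the characteristic of $k$ and $\iota(k^\times)$ acts by a single character on a fixed weight-$n$ module, the category of weight $n$ $\G$-modules is semisimple — concretely, one can average over the finite group $\G/\iota(k^\times) \cong \K$ (or invoke that a weight $n$ representation factors through a finite central quotient on which Maschke's theorem applies after fixing the central character) to produce a $\G$-stable complement $W'$ with $V = W \oplus W'$. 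Then $W'$ is a weight $n$ $\G$-module of strictly smaller dimension, and induction finishes the decomposition.

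Alternatively, and perhaps more in keeping with the elementary spirit of the surrounding section, one can avoid invoking Maschke directly: since every nonzero weight $n$ submodule again contains a $D_n$-dimensional (hence irreducible) submodule, one shows that $V$ is generated by its irreducible submodules and then uses the standard lemma that a module which is a sum of simple submodules is a direct sum of some of them. Either packaging gives statement (b).

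The main obstacle is the complement/semisimplicity step in part (b): Proposition \ref{key:Proposition} only produces \emph{one} small submodule, not a decomposition, so the genuine content is arranging that weight $n$ submodules split off. I expect this to be routine given the characteristic hypothesis — it is essentially Maschke's theorem for the finite group $\K$ with a fixed central character — but it is the place where the hypothesis that $\operatorname{char} k \nmid \#\K$ is actually used, so it deserves an explicit sentence rather than being swept under the rug. Part (a) and the reduction in part (b) are then immediate.
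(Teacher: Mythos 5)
Your proposal is correct and takes essentially the same approach as the paper: part (a) is argued identically, and for part (b) the paper likewise inducts on dimension and splits off the irreducible submodule by averaging a projection — concretely over the finite subgroup $G' = \{(\alpha,x,y) : \alpha \in \mu_{d_1},\, x \in K_1,\, y \in K_2\} \subseteq \G$, then using the weight-$n$ condition to see that the kernel of the averaged projection is $\G$-stable — which is the same Maschke-with-fixed-central-character device you describe (and it is indeed where $\operatorname{char} k \nmid \#\K$ enters). One caution: your second ``alternative'' packaging — every nonzero weight $n$ submodule contains a simple submodule, hence $V$ is a sum of simples — does not follow as stated (the regular module over $k[x]/(x^2)$ has the first property but not the second), so the averaging argument is the one to keep.
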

\begin{proof}
To prove (a) note that if $V$ is an irreducible weight $n$ $\G$-module then it is nonzero and hence, by Proposition \ref{key:Proposition}, admits a submodule of dimension $D_n$.  Since $V$ is irreducible this submodule must equal $V$ whence the dimension of $V$ equals $D_n$.

Conversely let $V$ be a weight $n$ $\G$-module of dimension $D_n$.  Let $W$ be a nonzero submodule.  By Proposition \ref{key:Proposition}, $W$ admits a submodule of dimension $D_n$.  Consequently we have that
$$ D_n \leq \dim W \leq \dim V = D_n\text{.}$$  Hence $W$ has dimension $D_n$ so $W = V$.  We conclude that $V$ is irreducible.

To prove (b) let $\mu_{d_1}\subseteq k^\times$ denote the multiplicative group of $d_1$th roots of unity.   Let $G'$ denote the subgroup
$$G':=\{(\alpha,x,y) : \alpha \in \mu_{d_1}, x\in K_1, y \in K_2\}$$ of $\G$.   

To finish the proof of Corollary \ref{cor:key:prop}, we induct on the dimension of $V$. The base case is $\dim V=0$ in which case the assertion holds.  If $\dim V = N$, then combining Proposition \ref{key:Proposition} and part (a), which we just proved, we see that $V$ admits an irreducible weight $n$ submodule $W$.  If $W = V$ then we are done.  Otherwise choose a projection $p_0: V \rightarrow W$ and let $p:V\rightarrow W$ be the projection defined by $$v\mapsto \frac{1}{|G'|}\sum_{g\in G'} g \cdot p_0(g^{-1}\cdot v) \text{.}$$  Then $\ker p$ is a $G'$-submodule of $V$ and $V=W\oplus \ker p$.  Let $s\in \ker p$.  Then, if $(\alpha,x,y)\in \G$, we obtain
$$\rho((\alpha,x,y))(s)=\rho((\alpha,0,0))(\rho((1,x,y))(s))=\alpha^n(\rho((1,x,y))(s)).$$  Since $(1,x,y)\in G'$ and since $\ker p$ is $G'$-stable we conclude that
$$\alpha^n(\rho((1,x,y))(s))\in \ker p \text{.}$$  Hence $\ker p$ is a weight $n$ $\G$-submodule of $V$ and,  by induction, $\ker p$ decomposes into  irreducible weight $n$ $\G$-modules.
\end{proof}

To construct irreducible weight $n$ $\G$-modules first let 
$y \in K_2 \text{, } \chi \in \Hom_{\ZZ}( \ker \pi_n, k^\times) \text{,}$ and 
$ W_{y,\chi} := \Span_{k} \{ \mathbf{e}_{y+z,\chi} \}_{z \in \image \pi_n}\text{.}$

As in the proof of Proposition \ref{key:Proposition}, we fix a set-theoretic section $\sigma$ of the surjective homomorphism $K_2 \rightarrow \image \pi_n$ induced by $\pi_n$.  For every $(\alpha,x,w) \in \G$ define an automorphism
$$\rho^{\sigma,n}_{y,\chi}((\alpha,x,w)) : W_{y,\chi} \rightarrow W_{y,\chi} $$
by
$\e_{y+z,\chi} \mapsto \alpha^n \langle x, y+z \rangle \chi(w + \sigma(z) - \sigma(nw +z)) \e_{y+z+nw,\chi}$
and extending linearly.

\begin{proposition}\label{existence:Prop}
The pair $(W_{y,\chi},\rho^{\sigma,n}_{y,\chi})$ is an irreducible weight $n$ $\G$-module.
\end{proposition}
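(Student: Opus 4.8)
The plan is to verify directly that the formula defining $\rho^{\sigma,n}_{y,\chi}$ gives a group action, that it is a weight $n$ representation, and that it is irreducible. We already know from Proposition \ref{key:Proposition} and Corollary \ref{cor:key:prop}(a) that any weight $n$ $\G$-module of dimension $D_n$ is irreducible, and $W_{y,\chi}$ has dimension $D_n = \#\image\pi_n$ by construction; so irreducibility will follow automatically once we know $(W_{y,\chi},\rho^{\sigma,n}_{y,\chi})$ is a weight $n$ $\G$-module. Thus the content is entirely in checking that $\rho^{\sigma,n}_{y,\chi}$ is a well-defined homomorphism $\G \to \operatorname{GL}(W_{y,\chi})$ of the required weight.

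First I would check the weight condition: plugging $(\alpha,0,0)$ into the defining formula, the middle and right factors collapse (since $\langle x,-\rangle$ is trivial for $x=0$, $\sigma(z)-\sigma(z)=0$, and $nw=0$), leaving $\e_{y+z,\chi}\mapsto \alpha^n\e_{y+z,\chi}$. So $\rho^{\sigma,n}_{y,\chi}((\alpha,0,0))$ acts as $\alpha^n\cdot\id$, which is the weight $n$ condition, and also shows each $\rho^{\sigma,n}_{y,\chi}((\alpha,x,w))$ is invertible (it is monomial with nonzero entries, sending basis vectors to scalar multiples of basis vectors, the permutation being $z\mapsto nw+z$ on $\image\pi_n$, which is a bijection). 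Second, I would verify multiplicativity
$$\rho^{\sigma,n}_{y,\chi}((\alpha,x,w))\circ\rho^{\sigma,n}_{y,\chi}((\beta,x',w')) = \rho^{\sigma,n}_{y,\chi}\big((\alpha,x,w)\cdot(\beta,x',w')\big)$$
by applying both sides to a basis vector $\e_{y+z,\chi}$ and comparing the scalar coefficients; recall from Proposition \ref{normal:form:theta}(c) that the product in $\G$ is $(\alpha\beta\langle x,w'\rangle,\,x+x',\,w+w')$. The resulting identity of scalars breaks into: the $\alpha^n$-powers multiplying correctly; the pairing terms, where one needs $\langle x,y+z+nw'\rangle\langle x',y+z\rangle = \langle x+x',y+z\rangle\langle x,nw'\rangle$, which holds because $\langle-,-\rangle$ is bilinear and $\langle x,nw'\rangle = \langle x,w'\rangle^n$; and the $\chi$-cocycle terms, where the two successive shifts $z\mapsto nw'+z\mapsto nw+nw'+z$ produce $\chi(w'+\sigma(z)-\sigma(nw'+z))\cdot\chi(w+\sigma(nw'+z)-\sigma(nw+nw'+z))$, which must equal the single-step term $\chi\big((w+w')+\sigma(z)-\sigma(nw+nw'+z)\big)$ coming from the product element — this is a telescoping identity, valid because $\chi$ is a homomorphism and $n(w+w')$ and $nw+nw'$ agree so the arguments differ by an element of $\ker\pi_n$ on which $\chi$ is the relevant character, and the $w+w'$ versus $w,w'$ discrepancy is exactly $nw'$ translated through $\sigma$, landing in $\ker\pi_n$.

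The main obstacle, and the one step deserving real care, is the bookkeeping in the $\chi$-cocycle identity: one must check that every argument fed to $\chi$ in the telescoping comparison genuinely lies in $\ker\pi_n$ (so that $\chi$ — defined only on $\ker\pi_n$ — is even applicable), using that $\pi_n(\sigma(z)) = z$ for $z\in\image\pi_n$ and that $\pi_n$ is additive. Once that is confirmed, the identity itself is a routine application of $\chi$ being a group homomorphism. Everything else — the permutation bijectivity, the weight computation, the bilinear-form manipulation — is immediate, and irreducibility is then cited from Corollary \ref{cor:key:prop}(a) together with $\dim W_{y,\chi} = D_n$. I would also remark that this is the same computation already implicitly performed in the proof of Proposition \ref{key:Proposition} via equation \eqref{CCC}, so one may alternatively phrase the proof as: the assignment $\e_{y+z,\chi}\mapsto s_{y,\chi}(z)$ intertwines $\rho^{\sigma,n}_{y,\chi}$ with the submodule action $W^{\sigma}_{y,\chi}\subseteq V$ constructed there for any ambient $V$ containing a nonzero vector in $V_{y,\chi}$, which exhibits $(W_{y,\chi},\rho^{\sigma,n}_{y,\chi})$ as a genuine $\G$-module of dimension $D_n$ and hence, by Corollary \ref{cor:key:prop}(a), as an irreducible weight $n$ $\G$-module.
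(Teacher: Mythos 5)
Your proposal is correct and follows essentially the same route as the paper: a direct verification that $\rho^{\sigma,n}_{y,\chi}$ is multiplicative (the same pairing and $\chi$-telescoping computation, with the arguments of $\chi$ landing in $\ker\pi_n$), followed by the observation that $k^\times$ acts with weight $n$ and $\dim W_{y,\chi}=D_n$, so irreducibility comes from Corollary \ref{cor:key:prop}(a). Only your closing alternative phrasing should be treated with care, since invoking an ambient $V$ with $V_{y,\chi}\neq 0$ presupposes the existence that this proposition is meant to establish; your primary, direct argument does not have this issue.
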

\begin{proof}
It is clear from the definition of $\rho^{\sigma,n}_{y,\chi}$ that $(W_{y,\chi},\rho^{\sigma,n}_{y,\chi})$ is a weight $n$ $\G$-module.
Since $k^\times$ acts with weight $n$ and since $W_{y,\chi}$ has dimension $D_n$, Corollary \ref{cor:key:prop} implies that the pair $(W_{y,\chi},\rho^{\sigma,n}_{y,\chi})$ is an irreducible weight $n$ $\G$-module. 
\end{proof}

We now characterize irreducible weight $n$ representations.

\begin{proposition}\label{number:non:equiv}
A weight $n$ $\G$-module $(V,\rho)$ is irreducible if and only if it is isomorphic to $(W_{y,\chi},\rho^{\sigma,n}_{y,\chi})$ for some $y \in K_2$ and some $\chi \in \Hom_{\ZZ}(\ker \pi_n, k^\times)$.  Furthermore $(W_{y,\chi},\rho^{\sigma,n}_{y,\chi})$ is isomorphic to $(W_{y',\chi'},\rho^{\sigma,n}_{y',\chi'})$ if and only if $y - y' \in \image \pi_n$ and $\chi = \chi'$.
\end{proposition}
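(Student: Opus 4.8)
The plan is to prove the two assertions in turn. For the first, by Corollary \ref{cor:key:prop}(a) and Proposition \ref{existence:Prop}, both $(V,\rho)$ and each $(W_{y,\chi},\rho^{\sigma,n}_{y,\chi})$ are $D_n$-dimensional irreducible weight $n$ modules, so it suffices to exhibit, given an arbitrary irreducible weight $n$ $\G$-module $(V,\rho)$, a pair $(y,\chi)$ and a $\G$-isomorphism $(W_{y,\chi},\rho^{\sigma,n}_{y,\chi}) \to (V,\rho)$. The construction in the proof of Proposition \ref{key:Proposition} already produces exactly this: running that argument, one finds $y \in K_2$ and $\chi \in \Hom_\ZZ(\ker\pi_n,k^\times)$ with $V_{y,\chi} \neq 0$, picks a nonzero $s_{y,\chi} \in V_{y,\chi}$, and the span of $\{s_{y,\chi}(z)\}_{z\in\image\pi_n}$ is a $D_n$-dimensional submodule, hence all of $V$. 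The map $\e_{y+z,\chi} \mapsto s_{y,\chi}(z)$ then extends to a linear isomorphism $W_{y,\chi} \to V$, and comparing the action formula \eqref{CCC} with the definition of $\rho^{\sigma,n}_{y,\chi}$ shows it is $\G$-equivariant. One should check that $s_{y,\chi}(z)$, $z \in \image\pi_n$, are linearly independent (they lie in distinct weight spaces $V_{y+z}$ for the $K_1$-action, by \eqref{CCC}, so independence is automatic once each is nonzero; nonvanishing follows since $\rho((1,0,\sigma(z)))$ is invertible).

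For the second assertion I would argue as follows. If $y - y' \in \image\pi_n$ and $\chi = \chi'$, then the spanning sets $\{\e_{y+z,\chi}\}$ and $\{\e_{y'+z,\chi'}\}$ are indexed by the same coset $y + \image\pi_n = y' + \image\pi_n$, so $W_{y,\chi} = W_{y',\chi'}$ as a vector space with the same spanning vectors up to reindexing; one checks directly from the formula for $\rho^{\sigma,n}$ that the identity (or the reindexing bijection on basis vectors) intertwines the two actions — the only subtlety is the dependence on the section $\sigma$, which only shifts basis vectors by nonzero scalars and so does not change the isomorphism class. Conversely, suppose $(W_{y,\chi},\rho^{\sigma,n}_{y,\chi}) \cong (W_{y',\chi'},\rho^{\sigma,n}_{y',\chi'})$. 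Restrict both modules to $K_1$: by \eqref{AA}-type reasoning, $W_{y,\chi}$ decomposes under $K_1$ into the weight spaces indexed by $\{y + z : z \in \image\pi_n\}$, each one-dimensional, and similarly for $W_{y',\chi'}$; an isomorphism of $\G$-modules is in particular an isomorphism of $K_1$-modules, so the two sets of $K_1$-weights must coincide, forcing $y + \image\pi_n = y' + \image\pi_n$, i.e. $y - y' \in \image\pi_n$. Then, replacing $(y',\chi')$ by $(y,\chi')$ (which we have just shown gives the same module up to isomorphism), one restricts to $\ker\pi_n$ acting through the $\e_{y,\bullet}$ coordinate, or more cleanly evaluates the action of an element $(1,0,w)$ with $nw = 0$ on the weight vector $\e_{y,\chi}$: \eqref{CCC} gives $\rho^{\sigma,n}_{y,\chi}((1,0,w))\e_{y,\chi} = \chi(w)\e_{y,\chi}$ (using $\sigma(0) = 0$ and $nw = 0$), so $\chi$ is recovered as the $\ker\pi_n$-character on the appropriate line, whence $\chi = \chi'$.

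I expect the main obstacle to be bookkeeping rather than conceptual: making sure the section-dependence of $\rho^{\sigma,n}_{y,\chi}$ is genuinely immaterial to the isomorphism class (so that the statement is well-posed), and handling the reindexing $z \mapsto z + (y - y')$ carefully when $y \neq y'$ but $y - y' \in \image\pi_n$, since the cocycle terms $\chi(w + \sigma(z) - \sigma(nw+z))$ must be reconciled under the shift. The cleanest way to organize this is to first prove a lemma that the isomorphism class of $(W_{y,\chi},\rho^{\sigma,n}_{y,\chi})$ is independent of the choice of $\sigma$ — by constructing the explicit intertwiner $\e_{y+z,\chi} \mapsto \chi(\sigma'(z) - \sigma(z))\,\e_{y+z,\chi}$ between $\rho^{\sigma,n}$ and $\rho^{\sigma',n}$ — and thereafter suppress $\sigma$ entirely; the rest is then the routine weight-space comparison sketched above.
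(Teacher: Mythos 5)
Your proposal is correct and follows essentially the same route as the paper: you realize an irreducible weight $n$ module as the span of the vectors $s_{y,\chi}(z)$ from the proof of Proposition \ref{key:Proposition} and match it with $(W_{y,\chi},\rho^{\sigma,n}_{y,\chi})$ via \eqref{CCC}, prove the ``if'' half of the uniqueness claim by reindexing the bases $\{\e_{y+z,\chi}\}$ and $\{\e_{y'+z,\chi}\}$, and obtain the converse by restricting to $K_1$ (comparing weight sets, so $y-y'\in\image\pi_n$) and to $\ker\pi_n$ acting through $(1,0,w)$ (recovering $\chi$). Your added care about the section-dependence, via the diagonal intertwiner $\e_{y+z,\chi}\mapsto\chi(\sigma'(z)-\sigma(z))\e_{y+z,\chi}$ and the cocycle reconciliation under the shift $z\mapsto z+(y-y')$, is a sound refinement of the paper's terser ``reordering one of them if necessary,'' not a different method.
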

\begin{proof}
If $(V,\rho)$ is irreducible then it equals the subspace 
$W^\sigma_{y,\chi} := \Span_k \{ s_{y,\chi}(z)\}_{z \in \image \pi_n}$ constructed in the proof of Proposition \ref{key:Proposition}, for some $y \in K_2$, for some element $\chi$ of  $\Hom_{\ZZ}(\ker \pi_n,k^\times)$, and for some nonzero vector $s_{y,\chi} \in V_{y,\chi}$.  

Identifying the basis vectors $\{ s_{y,\chi}(z)\}_{z \in \image \pi_n}$ of $W^\sigma_{y,\chi}$ with those  
$\{\e_{y+z,\chi}\}_{z \in \image \pi_n} $ of $W_{y,\chi}$, and computing the matrix representations of $\rho$ and $\rho^{\sigma,n}_{y,\chi}$ with respect to these bases, we conclude that $(V,\rho)$ is isomorphic to $(W_{y,\chi},\rho^{\sigma,n}_{y,\chi})$.

If $y - y' \in \image \pi_n$ and $\chi = \chi'$ then we conclude that $(W_{y',\chi'},\rho^{\sigma,n}_{y',\chi'})$ is isomorphic to $(W_{y,\chi},\rho^{\sigma,n}_{y,\chi})$ by considering their matrix representations with respect to the bases 
$$ \{ \e_{y' + z,\chi} \}_{z \in \image \pi_n} \text{ and } \{ \e_{y + z,\chi} \}_{z \in \image \pi_n} $$ reordering one of them if necessary.  

Conversely if $(W_{y,\chi},\rho^{\sigma,n}_{y,\chi})$ is isomorphic to $(W_{y',\chi'},\rho^{\sigma,n}_{y',\chi'})$ then they are isomorphic as $K_1$-modules and as $\ker \pi_n$-modules.  If they are isomorphic as $K_1$-modules, then every $y+z$, $z \in \ker \pi_n$ equals $y' + z'$ for some $z' \in \ker \pi_n$.  In particular, $y - y' \in \ker \pi_n$. If they are isomorphic as $\ker \pi_n$-modules, then $\chi = \chi'$. 
\end{proof}

\begin{proof}[Proof of Theorem \ref{mainTheorem1}]
The first assertion is a consequence of Proposition \ref{number:non:equiv} and a counting argument.  The second and third assertions are immediate consequences of Corollary \ref{cor:key:prop}.  For the final assertion let $(V,\rho)$ be a $\G$-module.   Then the $k^\times$-module $\operatorname{Res}^{\G}_{k^\times}(V)$ admits a decomposition $$\operatorname{Res}^{\G}_{k^\times}(V) = \bigoplus_{n \in \ZZ} V_n $$ into eigenspaces.  Since the image of $k^\times$ in $\G$ is contained in the centre of $\G$ each eigenspace $V_n$ is $\G$-stable and, hence, a weight $n$ $\G$-module.
\end{proof}

\subsection{Remarks}

\subsubsection{Finite Heisenberg groups and their representations}
Let $n$ be an integer and let $r$ be the remainder obtained by dividing $n$ by $d_1$.  Let $G'$ be the subgroup
$$G' := \{(\alpha,x,y) : \text{ $\alpha \in \mu_{d_1}$, $x \in K_1$, and $y \in K_2$} \} $$ defined in the proof of Corollary \ref{cor:key:prop}.  There is a one-to-one correspondence between irreducible weight $n$ representations of $\G$ and irreducible weight $r$ representations of $G'$.  

As a consequence, for a fixed $r$, $0\leq r \leq d_1 -1$, there exists $\gcd(r,d_1)^2 \times \cdots \times \gcd(r,d_p)^2$ non-isomorphic irreducible $G'$-modules each of which has dimension 
$\frac{d_1 \times \cdots \times d_p}{\gcd(r,d_1) \times \cdots \times \gcd(r,d_p)}$.  Since
$$|G'| = d_1(d_1 \times \cdots \times d_p)^2 = \sum_{r = 0}^{d_1 - 1} \gcd(r,d_1)^2  \times \cdots \times \gcd(r,d_p)^2 \left( \frac{d_1 \times \cdots \times d_p}{\gcd(r,d_1) \times \cdots \times \gcd(r,d_p)} \right) ^2 $$ we conclude that $G'$ has exactly $\sum_{r = 0}^{d_1 - 1} \gcd(r,d_1)^2 \times \cdots \times \gcd(r,d_p)^2$ non-isomorphic irreducible $G'$-modules and, also, exactly this number of conjugacy classes.  See for example \cite[\S 2.4]{Serre:Reps}.

\subsubsection{Weight $1$ representations}
If $n = 1$, then $\pi_n$ is an isomorphism and we may take the set-theoretic section $\sigma$ to be the identity map.  Also, when $n = 1$, every $\chi  \in \Hom_\ZZ(\ker \pi_n, k^\times)$ is trivial.  The resulting representation $(W_{y,\chi},\rho^{\sigma,1}_{y,\chi})$ takes the form $W_{y,\chi} := \Span_{k} \{\e_z \}_{z \in K_2}$ and an element $(\alpha,x,w)$ acts by  
$(\alpha,x,w) \cdot \e_{z} := \alpha \langle x,z\rangle \e_{z+w}$.

\subsubsection{Induced representations}\label{induced:reps}
We now show that every irreducible representation of $\G$ is induced by a $1$-dimensional representation of a suitable subgroup.  In light of Theorem \ref{mainTheorem1}, and its proof, it suffices to prove that every 
$(W_{y,\chi},\rho^{\sigma,n}_{y,\chi})$, where $y \in K_2$, $\chi \in \Hom_{\ZZ}(\ker \pi_n, k^\times)$, and $\sigma$ is a set-theoretic section of $\pi_n$, is induced by such a representation.

To achieve this fix $y \in K_2$, $\chi \in \Hom_{\ZZ}(\ker \pi_n, k^\times)$, and  define 
$V_{y,\chi} := \Span_{k} \{ \e_{y,\chi}\} \text{.}$   Let $\G(\ker \pi_n)$ denote the subgroup of $\G$ defined by 
$\G(\ker \pi_n) := \{(\alpha,x,w) : w \in \ker \pi_n \}\text{.}$  
We regard $V_{y,\chi}$ as a $\G(\ker \pi_n)$-module, by defining 
$$(\alpha,x,w)\cdot \e_{y,\chi} := \alpha^n \langle x,y \rangle \chi(w) \e_{y,\chi}$$
 for $(\alpha,x,w) \in \G(\ker \pi_n)$, and observe that the inclusion 
 $ V_{y,\chi} \rightarrow (W_{y,\chi},\rho^{\sigma,n}_{y,\chi})$,
 defined by $\e_{y,\chi} \mapsto \e_{y,\chi}$, is a  $\G(\ker \pi_n)$-homomorphism.

Using these considerations we check that $(W_{y,\chi},\rho^{\sigma,n}_{y,\chi})$ is isomorphic to $\Ind^{\G}_{\G(\ker \pi_n)}(V_{y,\chi})$.

\subsection{Proof of Theorem \ref{thm:theta:vb:1} and Corollary \ref{unique:theta:irred}}\label{proof:theta:rep:vb}
Combining everything we are able to complete the proof of the results stated in \S \ref{main:results}.

\begin{proof}[Proof of Theorem \ref{thm:theta:vb:1}]
If $E$ is a simple semi-homogeneous vector bundle of separable type then $\G(E)$ is a non-degenerate theta group.  As a consequence, Theorem \ref{thm:theta:vb:1} is a special case of Theorem \ref{mainTheorem1}.
\end{proof}

\begin{proof}[Proof of Corollary \ref{unique:theta:irred}]
We know that $\G(E)$ has a unique irreducible weight $1$ representation.  This representation has dimension equal to $ \sqrt{\#\K(E)}$.  On the other hand $\H^{\ii(E)}(X, E)$ is a weight $1$ representation of dimension $| \chi(E) | =  \sqrt{\#\K(E)}$.  
\end{proof}

\section{Adelic theta groups and line bundles}\label{adelic:theta}

In this section we explain how to construct adelic theta groups associated to (total spaces of) line bundles on abelian varieties.  We also determine some properties of these groups and introduce some notation which we find helpful for proving Theorem \ref{adelic:NS}.  The proof of this theorem is the subject of \S \ref{proof:adelic:theta}.

\subsection{Preliminaries}
Let $X$ be an abelian variety and let $L$ be the total space of a line bundle on $X$.  If $x \in X$ then let $\Aut_x(L)$ denote the set of automorphisms of $L$ which cover $T_x$.  In what follows we let $K(L) := \{x \in X(k) : \Aut_x(L) \not = \emptyset \}$ and let $G(L)$ denote the group consisting of pairs $(x,\phi)$ where $x \in K(L)$ and where $\phi$ is an automorphism of $L$ covering $T_x$.

We now recall some of the notation introduced in \S \ref{main:results2}.  
Recall that $I$ denotes the set of positive integers which are not divisible by the characteristic of $k$, 
$$\tor(X) := \{ x \in X(k) : n x = 0 \text{ for some $n \in I$} \} \text{,}$$  and $\V(X) := \idlim \tor(X)$, where the limit is indexed by $I$ and where the maps are given by 
$[ n / m ] : \tor(X) \rightarrow \tor(X)$ whenever $m$ divides $n$.  

We identify $\V(X)$ with the set 
$$ \{ \x = (x_i)_{i \in I} : x_i \in \tor(X) \text{ and } [n / m] x_n = x_m \text{ if $n,m \in I$ and $m$ divides  $n$} \}$$ 
and let $\TT(X) := \{ \x \in \V(X) : x_1 = 0\}$.  This is a subgroup of $\V(X)$.

We now introduce some additional notation which we find helpful.
Let $L$ be the total space of a line bundle on $X$.  If $\x \in \V(X)$, then let 
$$ \supp^L(\x) := \{ n \in I : \Aut_{x_n}(n_X^* L) \not = \emptyset \}\text{.}$$

A homomorphism of abelian varieties $f  : Y \rightarrow X$ induces a homomorphism
$$ \V(f) : \V(Y) \rightarrow \V(X) \text{, defined by $\y = (y_i)_{i \in I} \mapsto (f(y_i))_{i \in I}$.} $$  We denote $\V(f)(\y) \in \V(X)$ simply by $f(\y)$ in what follows.

The following proposition plays a role in \S \ref{adelic}.  The first part can be seen as the analogue of Mumford's $4 > 2$ lemma \cite[p. 102]{MumII} in our setting.

\begin{proposition}\label{supp:prop}
Let $f : X \rightarrow Y$ be a homomorphism of abelian varieties.  Let $L$ be a line bundle on $Y$.  The following assertions hold
\begin{enumerate}
\item{if $\y \in \V(Y)$ and $m$ is the order of $y_1$, then $m \in \supp^L(\y)$;}
\item{if $\y \in \V(Y)$, $m \in \supp^L(\y)$ and $m \mid n$, then $n \in \supp^L(\y)$;}
\item{if $\y,\z \in \V(Y)$, then $\supp^L(\z) \cap \supp^L(\y) \not = \emptyset$; }
\item{if $\x \in \V(X)$, then $\supp^{f^*L}(\x) \cap \supp^L(f(\x)) \not = \emptyset$.}
\end{enumerate}
\end{proposition}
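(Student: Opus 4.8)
The plan is to establish the four assertions more or less in the order given, since (b) and (c) build on (a), and (d) is essentially a translation of (c) through $f$. For (a): if $\y \in \V(Y)$ and $m$ is the order of $y_1$, then $m y_1 = 0$, so $y_m \in \tor(Y)$ satisfies $m y_m = y_1 \cdot$ (compatibility relation) — more precisely, by the defining compatibility $(m/1) y_m = y_1$, and $y_1$ has order $m$, so $y_m$ itself is an $m$-torsion point whose image under multiplication by $m$ recovers $y_1$. The key point is that $m_X^* L$ restricted to the fibre over $y_m$ admits an automorphism covering $T_{y_m}$ precisely when $y_m \in K(m_X^* L)$; and since $m y_m \in X[m]$... the cleanest route is to recall from Mumford's theory that $K(n_X^* L) \supseteq X[n]$ for every $n \in I$ (because $n_X^* L \cong L^{\otimes n^2}$ modulo a translation-invariant bundle, and $X[n]$ is killed by the relevant polarization computation), hence whenever $x_n \in X[n]$ we get $n \in \supp^L(\x)$. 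Applying this with $n = m$ and noting $y_m \in Y[m]$ gives (a). I would state the inclusion $Y[n] \subseteq K(n_Y^* L)$ as the central lemma and cite \cite[\S 7]{MumII} or \cite[Chapter 4]{theta3}; this is the one genuinely external input.

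For (b): suppose $m \in \supp^L(\y)$, so $\Aut_{y_m}(m_Y^* L) \neq \emptyset$, and $m \mid n$. Write $n = md$. The relation between $n_Y^* L$ and $m_Y^* L$ is $n_Y^* L = (m_Y \circ d_Y)^* L = d_Y^*(m_Y^* L)$, and pulling back an automorphism of $m_Y^* L$ covering $T_{y_m}$ along $d_Y$ produces an automorphism of $d_Y^*(m_Y^* L) = n_Y^* L$ covering $T_{z}$ where $z$ is any point with $d_Y(z) = y_m$; since $(n/m) y_n = y_m$ by the compatibility in $\V(Y)$, we may take $z = y_n$. Hence $\Aut_{y_n}(n_Y^* L) \neq \emptyset$, i.e. $n \in \supp^L(\y)$. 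This is the functoriality of $\Aut_x(-)$ under pullback, exactly as in \eqref{pull:back:aut:1}, so the argument is short once the identification $n_Y^* L = d_Y^* (m_Y^* L)$ is written down.

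For (c): given $\y, \z \in \V(Y)$, let $m$ be the order of $y_1$ and $m'$ the order of $z_1$. By (a), $m \in \supp^L(\y)$ and $m' \in \supp^L(\z)$. Now $\lcm(m, m')$ lies in $I$ (both $m, m'$ divide the exponent of $\tor(Y)$'s relevant finite layer, and $I$ is closed under $\lcm$ since it consists of integers prime to the characteristic), and it is divisible by both $m$ and $m'$, so by (b) it lies in both $\supp^L(\y)$ and $\supp^L(\z)$; hence the intersection is nonempty. For (d): given $\x \in \V(X)$, set $\y := f(\x) \in \V(Y)$. We want $\supp^{f^*L}(\x) \cap \supp^L(\y) \neq \emptyset$. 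The point is that for any $n \in I$, the square relating $n_X$, $n_Y$ and $f$ (namely $f \circ n_X = n_Y \circ f$) lets us pull back an automorphism of $n_Y^* L$ covering $T_{y_n}$ to an automorphism of $f^*(n_Y^* L) = n_X^*(f^* L)$ covering $T_{x_n}$ — so in fact $\supp^L(f(\x)) \subseteq \supp^{f^*L}(\x)$, and combined with (a) (applied to $\y$, giving that $\supp^L(\y)$ is nonempty) the intersection is nonempty; indeed it equals $\supp^L(f(\x))$. The expected main obstacle is pinning down the canonical isomorphisms $n_Y^* L = d_Y^*(m_Y^* L)$ and $f^*(n_Y^* L) = n_X^*(f^* L)$ compatibly enough that the transported automorphisms genuinely cover the correct translations $T_{y_n}$, $T_{x_n}$ rather than translations by some other preimage — this is the same bookkeeping that underlies diagram \eqref{pull:back:aut:2}, and requires using the compatibility relations defining $\V(X)$ and $\V(Y)$ to identify the relevant preimages with the chosen components $x_n$, $y_n$.
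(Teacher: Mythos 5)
Your arguments for (b) and (c) are essentially the paper's, and your route for (d) (proving the containment $\supp^L(f(\x)) \subseteq \supp^{f^*L}(\x)$ via pull-back along $f$, rather than exhibiting one common element) is a harmless slight strengthening. The genuine problem is in (a), on which (c) and (d) then depend. You assert that $y_m$ is an $m$-torsion point; this is false in general. The compatibility relation gives $m\, y_m = y_1$, and since $y_1$ has order $m$ we only get $m^2 y_m = 0$, i.e.\ $y_m \in Y_{m^2}$, and typically $y_m \notin Y_m$ (take any compatible system through a point of order $m^2$). Consequently the lemma you propose to cite, $Y_n \subseteq K(n_Y^*L)$, does not apply to $y_m$, and your proof of (a) does not go through as written.

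The fix is the stronger (and still standard) inclusion actually used in the paper: $Y_{m^2} \subseteq K(m_Y^*L)$. This follows from the computation you already sketch: $m_Y^*L$ agrees with $L^{\otimes m^2}$ modulo $\Pic^0(Y)$, so $x \in K(m_Y^*L)$ iff $m^2\phi_L(x) = 0$, i.e.\ $K(m_Y^*L) = \{x : m^2 x \in K(L)\} \supseteq Y_{m^2}$; applying this to $y_m \in Y_{m^2}$ gives $\Aut_{y_m}(m_Y^*L) \neq \emptyset$, hence $m \in \supp^L(\y)$. With this correction, (a) holds, and your (b)--(d) (which match the paper's proof, using $q_Y \circ T_{y_n} = T_{y_m} \circ q_Y$ for (b), the $\lcm$ of the orders of $y_1$ and $z_1$ for (c), and the identification $f^*(n_Y^*L) = n_X^*(f^*L)$ for (d)) complete the proposition. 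Note also that in (d) the paper again uses the quadratic torsion bound, checking $f(x_m) \in Y_{m^2}$, which is the same point you must be careful about there.
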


\begin{proof}
To prove (a) let $\y \in \V(Y)$ and let $m$ be the order of $y_1$.  Then $y_m \in Y_{m^2}$ and so $\Aut_{y_m}(m_Y^* L) \not = \emptyset$ because $Y_{m^2} \subseteq K(m_Y^*L)$.  Hence $m \in \supp^L(\y)$.

To prove (b) let $q = n / m$ and note that
$$n_Y^* L = q_Y^* m_Y^* L \cong q_Y^* T^*_{y_m} m_Y^* L  = T^*_{y_n} (q_Y^* m_Y^* L) = T^*_{y_n}n^*_Y L \text{.}$$

To prove (c) let $\y,\z \in \V(Y)$ and let $m$ be the least common multiple of the order of $y_1$ and $z_1$.  Then, by parts (a) and (b), $m \in  \supp^L(\y) \cap \supp^L(\z)$.

To prove (d) if $\x \in \V(X)$ and if $m$ is the order of $x_1$ then $m \in \supp^{f^*L}(\x)$ by part (a) applied to $X$ and $f^*L$.  On the other hand $f(x_m) \in Y_{m^2}$ so that $m$ is an element of $\supp^L(f(\x))$ as well.
\end{proof}

\subsection{Construction and first properties of adelic theta groups}\label{adelic}
Let $X$ be an abelian variety and let $L$ be the total space of a line bundle on $X$.  
We indicate how the \emph{adelic theta group of $L$}, which we denote by $\widehat{\G}(L)$, is constructed and discuss some of its first properties.  The construction we give here is similar to what is done in \cite[\S 7]{MumII}, which applies to the case of polarized 2-towers of abelian varieties, and \cite[Chapter 4]{theta3} which gives an approach for handling the case of the $n$-tower over $X$ obtained by considering the isogenies $n_X : X \rightarrow X$, for all natural numbers $n$.

\subsubsection{Preliminaries} Suppose that $\x \in \V(X)$, that and that $m$ and $n$ are natural numbers with $m \mid n$ and $m \in \supp^L(\x)$.  
Then there exists morphisms
$$a_{n,m}^{L,\x} : \Aut_{x_m}(m_X^* L) \rightarrow \Aut_{x_n}(n_X^* L)$$ of $k^\times$-torsors.  These morphisms are constructed in a manner similar to what is done by Mumford \cite[p. 102]{MumII} and have the
properties that: 
\begin{enumerate}
\item{if $\x \in \V(X)$, then $a_{n,m}^{L, \x} \circ a_{m,p}^{L, \x} = a_{n,p}^{L, \x}$ for all $n,m,p \in \supp^L(\x)$ whenever $p \mid m \mid n$;}
\item{if $\x \in \V(X)$, then $a^{L, \x}_{n,n} = \id_{\Aut_{x_n}(n_X ^* L)}$;}
\item{if $\x$ and $\y$ are elements of $\V(X)$, then  
$a^{L, \x + \y}_{n,m}(\phi \circ \psi) = a_{n,m}^{L, \x}(\phi) \circ a^{L, \y}_{n,m}(\psi)\text{,}$ for all $\phi \in \Aut_{x_m}(m_X^* L)$ and all $\psi \in \Aut_{y_m}(m_X^* L)$ whenever 
$m \in \supp^L(\x) \cap \supp^L(\y) \text{.}$}
\end{enumerate}

Let us indicate how the $a_{n,m}^{L,\x}$ are constructed.  To begin with we have isomorphisms of $k^\times$-torsors
$\Aut_{x_n}(q_X^* m_X^* L) \rightarrow \Aut_{x_n}(n_X^* L)$
defined by sending an element $\phi \in \Aut_{x_n}(q_X^* m_X^* L)$ to the element of $\Aut_{x_n}(n_X^* L)$ determined by the composition 
$$ n_X^* L = q_X^* m_X^* L \xrightarrow{\phi} q_X^* m_X^* L = n_X^* L\text{.}$$
Using this isomorphism we obtain pull-back morphisms of $k^\times$-torsors
$$ \Aut_{x_m}(m_X^*L) \xrightarrow{q_X^*}  \Aut_{x_n}(q_X^* m_X^*L) \rightarrow  \Aut_{x_n}(n_X^* L)$$ which we denote by $a_{n,m}^{L,\x}$.

\subsubsection{The group $\widehat{\G}(L)$}  Our construction of $\widehat{\G}(L)$ is similar to what is done by Mumford, see \cite[p. 103]{MumII}.  Indeed,  as in \cite[Definition 4, p. 103]{MumII}, we let $\widehat{\G}(L)$ denote the set of pairs $(\x, \{ \alpha_n  \}_{n \in \supp^L(\x)})$, where $ \x \in \V(X)$, $ \alpha_n \in \Aut_{x_n}(n_X^* L)$,  and 
$a_{n,m}^{\x, L} (\alpha_m) = \alpha_n$ whenever $m $ is an element of $\supp^L(\x)$ and $m \mid n  $.   Such a collection $\{ \alpha_n  \}_{n \in \supp^L(\x)}$ is what Mumford calls a compatible set of isomorphisms, see \cite[p. 102]{MumII}.  

Observe first that $\widehat{\G}(L)$ is nonempty.    Indeed,  if 
 $\x \in \V(X)$, then choose some $p \in \supp^L(\x)$, and let $\alpha_p$ be an element of $\Aut_{x_p}(p_X^* L)$.  Then   for $\ell \in \supp^L(\x)$, define
\begin{equation}\label{define:gamma1} \alpha_\ell := \begin{cases}
a^{L,\x}_{\ell,p}(\alpha_p) & \text{ if $p \mid \ell $} \\
a^{L,\x}_{\ell p,\ell} \ ^{-1} (a^{L,\x}_{\ell p,p}(\alpha_p)) & \text{ if $p \nmid \ell $ .}
\end{cases} \end{equation}
Then, using the properties of the morphisms $a^{L, \x}_{m,n}$, we check that $(\x, \{\alpha_n\}_{n \in \supp^L(\x)})$ is an element of $\widehat{\G}(L)$.
As in \cite[p. 103]{MumII}, the group operation is defined as follows.

If $(\x,\{\alpha_n\}_{n\in \supp^L(\x)})$ and $(\y,\{\beta_m\}_{m\in \supp^L(\y)}) \in \widehat{\G}(L)$ then let
\begin{equation}\label{adelic:group:operation}
(\x,\{\alpha_n\}_{n\in \supp^L(\x)})\cdot(\y,\{\beta_m\}_{m\in \supp^L(\y)}):=(\x+\y,\{\gamma_{\ell} \}_{\ell \in \supp^L(\x+\y)}) 
\end{equation}
where $\{\gamma_{\ell}\}_{\ell \in \supp^L(\x+\y)}$ is defined by choosing some element 
$p$ of $\supp^L(\x)\cap \supp^L(\y)$, which is nonempty and contained in $\supp^L(\x + \y)$, defining $\gamma_p:=\alpha_p \circ \beta_p$, which is an element of $\Aut_{x_p + y_p}(p_X^* L)$, and defining, for all $\ell \in \supp^L(\x + \y)$,
$$
\gamma_{\ell} := \begin{cases}
a^{L,\x+\y}_{\ell,p} (\gamma_p) & \text{ if $p \mid \ell $} \\
a^{L,\x+\y}_{\ell p,\ell} \ ^{-1} (a^{L,\x+\y}_{\ell p,p}(\gamma_p)) & \text{ if $p \nmid \ell$ .} \end{cases}
$$

The right hand side of \eqref{adelic:group:operation} is a well defined element of $\widehat{\G}(L)$, the pair $(\widehat{\G}(L),\cdot)$ is a group, is a central extension of $k^\times$ by $\V(X)$, and 
contains an isomorphic copy of $\TT(X)$.  

\subsubsection{The skew-symmetric bilinear form $[-,-]_{\widehat{\G}(L)}$}
Suppose that $(\x, \{\alpha_n \}_{n \in \supp^L(\x)})$ and $(\y, \{\beta_n\}_{n \in \supp^L(\y)})$ are elements of $\widehat{\G}(L)$.  If $p$ is an element of $\supp^L(\x) \cap \supp^L(\y)$ then $\alpha_p \circ \beta_p \circ \alpha^{-1}_p \circ \beta^{-1}_p$ corresponds to a unique $\gamma \in k^\times$ which is independent of our choice of $p$.  

Also
if $[-,-]_{\widehat{\G}(L)}$ denotes the commutator of $\widehat{\G}(L)$ then
$$ [(\x, \{\alpha_n\}_{n \in \supp^L(\x)}),(\y,\{\beta_n\}_{n \in \supp^L(\y)})]_{\widehat{\G}(L)} = (\mathbf{0},\{ \gamma \id_{n^*_X L}\}_{n \in \supp^L(\mathbf{0})}) \text{.}$$
As a consequence we obtain a skew-symmetric bilinear form
$$[-,-]_{\widehat{\G}(L)} : \V(X) \times \V(X) \rightarrow k^\times \text{, defined by $[\x,\y]_{\widehat{\G}(L)} = \gamma$.}$$ 

Observe also that if $\x,\y \in \V(X)$, if $p \in \supp^L(\x) \cap \supp^L(\y)$, and if 
$$[-,-]_{G(p_X^* L)} : K(p_X^* L) \times K(p_X^* L) \rightarrow k^\times$$ 
denotes the skew-symmetric bilinear form determined by the commutator of the theta group $G(p_X^* L)$,
then 
$  [\x,\y]_{\widehat{\G}(L)} = [x_p,y_p]_{G(p_X^* L)}\text{.}$

\subsubsection{The group homomorphism $\widehat{\G}(f)$}\label{pullback}
Similar to what is done in \cite[Proposition 4.9, p. 51]{theta3} we check that our construction of adelic theta groups behaves well with respect to isogenies.  

Let $f : X \rightarrow Y$ be a homomorphism of abelian varieties and let $L$ be a line bundle on $Y$.  We construct a group homomorphism $\widehat{\G}(f) : \widehat{\G}(f^* L) \rightarrow \widehat{\G}(L)$ which fits into the commutative diagram
\begin{equation}\label{pull:back:theta:diagram}\xymatrix{  
1 \ar[r] & k^\times \ar[r] \ar @{=} [d] & \widehat{\G}(f^* L) \ar[r] \ar[d]^-{\widehat{\G}(f)}& \V(X) \ar[r] \ar[d]^-{\V(f)} & 0 \\
1 \ar[r] & k^\times \ar[r] & \widehat{\G}( L) \ar[r] & \V(Y) \ar[r] & 0 \text{.}
} \end{equation}
In other words the extension determined by $\widehat{\G}(f^* L)$ is equivalent to the pull-back, with respect to the group homomorphism $\V(f) : \V(X) \rightarrow \V(Y)$, of the extension determined by $\widehat{\G}(L)$.

First of all if $\x \in \V(X)$ and  $m \in \supp^L(f(\x))$, then there exists isomorphisms 
$$b^{f,L}_{\x,m} : \Aut_{x_m}(m_X^* f^* L) \rightarrow  \Aut_{f(x_m)}(m_Y^* L)$$
of $k^\times$-torsors.
These morphisms have the properties that:  
\begin{enumerate}
\item{if $\x \in \V(X)$ and $m \mid n$ then the diagram
$$ 
\xymatrix{ \Aut_{x_m}(m_X^* (f^* L) ) \ar[r]^-{ a^{f^*L, \x}_{n,m} } \ar[d]_-{b^{f,L}_{\x,m} } & \Aut_{x_n}(n_X^* ( f^* L))  \ar[d]^-{b^{f,L}_{\x,n}} \\
\Aut_{f(x_m)}(m_Y^* L) \ar[r]^-{a^{L, f(\x)}_{n,m}} & \Aut_{f(x_n)}(n_Y^* L)
} $$ commutes;
 }
 \item{ if $\x, \z \in \V(X)$, then 
 $b^{f,L}_{\x + \z,m}(\phi \circ \psi) = b^{f,L}_{\x,m}(\phi) \circ b^{f,L}_{\z,m}(\psi) $
for all $m \in \supp^{f^* L}(\x) \cap \supp^{f^* L}(\z)$,  $\phi \in \Aut_{x_m}(m_X^* (f^* L))$, and  $\psi \in \Aut_{z_m}(m_X^*(f^* L))$. }
\end{enumerate}
The morphisms $b^{f,L}_{\x,m}$ can be constructed in a manner similar to the method employed in the proof of \cite[Proposition 4.9]{theta3}. 
 More explicitly first note that, since $m_Y \circ f = f \circ m_X$, we have isomorphisms 
$\Aut_{x_m}(f^* m_Y^* L) \rightarrow \Aut_{x_m}(m_X^* f^* L) \text{.}$ Composing the inverse of these isomorphisms with the inverse of the pullback morphisms 
$f^* : \Aut_{f(x_m)}(m_Y^* L) \rightarrow \Aut_{x_m}(f^* m_Y^* L)$ yields the isomorphisms
$b^{f,L}_{\x,m}$.

To construct the group homomorphism $\widehat{\G}(f) : \widehat{\G}(f^* L) \rightarrow \widehat{\G}(L)$, let $(\x ,\{\alpha_n \}_{n \in \supp^{f^*L}(\x)})$ be an element of  $\widehat{\G}(f^*L)$, let $m$ be the order of $x_1$ and let $\y := f(\x)$.  Then $m \in \supp^{f^*L}(\x) \cap \supp^L(\y)$ and we can let $\beta_m := b^{f,L}_{\x,m}(\alpha_m)$ and define, for $p \in \supp^L(\y)$, 
$$ \beta_p := \begin{cases}
a^{L,\y}_{p,m} (\beta_m) & \text{ if $m \mid p $} \\
a^{L,\y}_{pm ,p} \ ^{-1} a^{L,\y}_{p m,m} (\beta_m) & \text{ if $m \nmid p $.}
\end{cases}
$$
The image of $(\x ,\{\alpha_n \}_{n \in \supp^{f^*L}(\x)})$ under $\widehat{\G}(f)$ is now defined to be
$ (\y, \{ \beta_n \}_{n \in \supp^L(\y)})\text{.}$ 
Using the properties of the morphisms $b^{f,L}_{\x,m}$, we check that the above definition defines a group homomorphism 
$\widehat{\G}(f) : \widehat{\G}(f^* L) \rightarrow \widehat{\G}(L)$ fitting into the commutative diagram \eqref{pull:back:theta:diagram}.

\section{The group homomorphism $\operatorname{NS}(X) \hookrightarrow \H^2(\V(X),k^\times)$}\label{proof:adelic:theta}

Before proving Theorem \ref{adelic:NS} we make some auxiliary remarks.

\subsection{The cohomology group $\H^2(\V(X),k^\times)$}  
We consider $k^\times$ as a trivial $\V(X)$-module and let $\H^2(\V(X),k^\times)$ denote the group of normalized $2$-cocycles $\V(X) \times \V(X) \rightarrow k^\times$ modulo coboundaries.  Recall that there is a 1-1 correspondence between central extensions of $k^\times$ by $\V(X)$ and elements of $\H^2(\V(X),k^\times)$.  In addition, since $k^\times$ is divisible, $\operatorname{Ext}^1_{\ZZ}(\V(X),k^\times) = 0$ so every abelian central extension of $k^\times$ by $\V(X)$ is trivial.  

\subsection{The map $\widehat{\G} : \Pic(X) \rightarrow \H^2(\V(X),k^\times)$}\label{Pic:adelic:map}
If $L$ and $M$ are isomorphic line bundles then it is clear that their adelic theta groups $\widehat{\G}(L)$ and $\widehat{\G}(M)$ are equivalent extensions of $k^\times$ by $\V(X)$.  We thus have a well-defined map
$\widehat{\G} : \Pic(X) \rightarrow \H^2(\V(X),k^\times) $
defined by sending the isomorphism class of a line bundle $L$ to the equivalence class of the extension determined by its adelic theta group $\widehat{\G}(L)$.  We let $[\widehat{\G}(L)]$ denote the equivalence class of the extension of $k^\times$ by $\V(X)$ which is determined by $\widehat{\G}(L)$.

\subsection{Outline of proof} To prove Theorem \ref{adelic:NS} we first prove that the map $$\widehat{\G} : \Pic(X) \rightarrow   \H^2(\V(X),k^\times) \text{,}$$ constructed in \S \ref{Pic:adelic:map}, is a group homomorphism.   To do this we determine a relationship amongst the adelic theta groups $\widehat{\G}(L)$, $\widehat{\G}(M)$, and $\widehat{\G}(L \otimes M)$.   Indeed we prove that the extension class $[\widehat{\G}(L \otimes M)]$ is the Baer sum $[\widehat{\G}(L)] + [\widehat{\G}(M)]$ of the extensions classes $[\widehat{\G}(L)]$ and $[\widehat{\G}(M)]$.  Note that this behaviour is in contrast to the behaviour of the theta groups $G(L)$, $G(M)$, and $G(L\otimes M)$; the main issue being that their is no clear relationship, in general, amongst the groups $K(L)$, $K(M)$, and $K(L\otimes M)$.

Our next task is to determine the kernel of $\widehat{\G}$.  
To this end let $L$ be a line bundle on $X$.   We first relate the commutator $[-,-]_{\widehat{\G}(L)}$ of $\widehat{\G}(L)$ to the (non-degenerate) Weil-pairing 
$ \overline{e_n} : X_n \times \widehat{X}_n \rightarrow \mu_n\text{,}$
defined for all integers $n$ which are relatively prime to the characteristic of $k$. (See for instance \S 20, p. 170 of \cite{Mum}.) We then use this relationship to prove that $\widehat{\G}(L)$ is abelian if and only if $L \in \Pic^0(X)$.  This fact, combined with the fact that $\operatorname{Ext}^1_\ZZ(\V(X),k^\times) = 0$, implies that $\Pic^0(X)$ is the kernel of $\widehat{\G}$. We thus obtain a well-defined injective group homomorphism $\widehat{\G} : \NS(X) \hookrightarrow \H^2(\V(X),k^\times)$, defined by sending the class of a line bundle $L \in \NS(X)$ to the class of the extension determined by its adelic theta group $\widehat{\G}(L)$.

Finally, to complete the proof of Theorem \ref{adelic:NS}, we use \S \ref{pullback} to check that the group homomorphism $\widehat{\G} : \NS(X) \hookrightarrow \H^2(\V(X),k^\times)$ behaves well under isogenies.

\begin{proof}[Proof of Theorem \ref{adelic:NS}]

\noindent
Step 1.  
Let $L$ and $M$ be line bundles on $X$.  The relation $$ [\widehat{\G}(L \otimes M)]=[\widehat{\G}(L)]+[\widehat{\G}(M)] $$ holds in  
$\H^2(\VV(X), k^\times)\text{.}$

To prove Step 1 we define normalized set-theoretic sections 
$$\sigma_L : \VV(X)\rightarrow \widehat{\G}(L) \text{, } \sigma_M : \VV(X)\rightarrow \widehat{\G}(M)\text{, and }
\sigma_{L\otimes M} : \VV(X)\rightarrow \widehat{\G}(L \otimes M)$$ and check that the relation  
$ [-,-]_{\sigma_{L\otimes M}} = [-,-]_{\sigma_L} + [-,-]_{\sigma_M} $ holds amongst the corresponding factor sets. To establish this we must show that if $\x$ and $\y$ are elements of $\VV(X)$, if   
$$ \sigma_{L}(\x)\cdot \sigma_{L}(\y) \cdot \sigma_{L}(\x+\y)^{-1}=(\mathbf{0}, \{\alpha \id_{n_X^*L}\}_{n\in \supp^{L}(\mathbf{0})}) \text{, } \alpha \in k^\times \text{,}$$
if 
$$\sigma_{M}(\x)\cdot \sigma_{M}(\y) \cdot \sigma_{M}(\x+\y)^{-1}= (\mathbf{0}, \{\beta \id_{n_X^*M}\}_{n\in \supp^{M}(\mathbf{0})}) \text{, } \beta \in k^\times \text{,}$$ and if 
$$ \sigma_{L\otimes M}(\x)\cdot \sigma_{L\otimes M}(\y) \cdot \sigma_{L\otimes M}(\x+\y)^{-1}=(\mathbf{0}, \{\gamma \id_{n_X^*(L\otimes M)}\}_{n\in \supp^{L\otimes M}(\mathbf{0})}) \text{, } \gamma \in k^\times 
$$
then $\gamma =\alpha \beta$.  Observe that this holds if $\gamma \id_{n_X^*(L\otimes M)}=\alpha \id_{n_X^*L}\otimes \beta \id_{n_X^*M}$ for some $n$.

We now define sections $\sigma_L$, $\sigma_M$ and $\sigma_{L\otimes M}$ with the desired properties. First of all for any element $\x$ of $\VV(X)$ let $m_{\x}$ be the order of $x_1$.  

Now let $\x \in \V(X)$.  If $\x = \mathbf{0}$ then define $\alpha_{m_{\x}}^{\x}:=\id_L$ and $\beta_{m_{\x}}^{\x}:=\id_M$.  Otherwise 
 choose $\alpha_{m_{\x}}^{\x} \in \Aut_{x_{m_{\x}}}(m_{\x}^* \ _X L)$, choose $\beta_{m_{\x}}^{\x} \in \Aut_{x_{m_{\x}}}(m_{\x}^* \ _X M)$ and  set
$$\gamma_{m_{\x}}^{\x}:= \alpha_{m_{\x}}^{\x}\otimes \beta_{m_{\x}}^{\x} \in \Aut_{x_{m_{\x}}}(m_{\x}^* \ _X(L \otimes M)) \text{.}$$  
Then, 
$\alpha_{m_{\x}}^{\x}$, $\beta_{m_{\x}}^{\x}$ and $\gamma_{m_{\x}}^{\x}$ determine (unique) elements of $\widehat{\G}(L)$, $\widehat{\G}(M)$, and $\widehat{\G}(L \otimes M)$ and hence allow us to define normalized sections $\sigma_L$, $\sigma_M$ and $\sigma_{L\otimes M}$. 

Now let $\x$ and $\y$ be elements of $\VV(X)$ and let $p:=\lcm(m_{\x}, m_{\y})$.  
Let 
$$ \text{$\phi_p \in \Aut_{0}(p_X^*L)$, $\psi_p \in  \Aut_{0}(p_X^*M)$ and $\eta_p \in \Aut_{0}(p_X^*(L\otimes M))$}$$
be such that 
$$ (0,\phi_p)=(x_p,\alpha^{\x}_p)\cdot (y_p,\alpha^{\y}_p)\cdot (x_p+y_p,\alpha_p^{\x+\y})^{-1} \in G(p_X^*L) \text{,}$$
$$(0,\psi_p)=(x_p,\beta^{\x}_{p})\cdot (y_p,\beta^{\y}_p)\cdot (x_p+y_p,\beta_p^{\x+\y})^{-1} \in G(p_X^*M) \text{, and} $$
$$(0,\eta_p)=(x_p,\gamma^{\x}_{p})\cdot (y_p,\gamma^{\y}_p)\cdot (x_p+y_p,\gamma_p^{\x+\y})^{-1} \in G(p_X^*(L\otimes M))\text{.}$$
(These are the $p$th components of $[\x,\y]_{\sigma_L}$, $[\x,\y]_{\sigma_M}$ and $[\x,\y]_{\sigma_{L\otimes M}}$.)  Since
$$\gamma_p^{\x}=\alpha_p^{\x}\otimes \beta_p^{\x}, \gamma_p^{\y}=\alpha_p^{\y}\otimes \beta_p^{\y} \textrm{ and } \gamma_p^{\x+\y}=\alpha_p^{\x+\y}\otimes \beta_p^{\x+\y}\text{,}$$ 
computing the above multiplications
we conclude that $\eta_p=\phi_p\otimes \psi_p$.

\noindent
Step 2:  If $L$ is a line bundle on $X$ then $\widehat{\G}(L)$ is abelian if and only if $L \in \Pic^0(X)$.

Assume that $\widehat{\G}(L)$ is abelian.  To prove that $L$ is an element of $\Pic^0(L)$ we relate $[-,-]_{\widehat{\G}(L)}$ to the Weil-pairing.  To this end,  let $n$ be a positive integer not divisible by the characteristic of $k$.  
If $x \in X_n$, $y \in n_X^{-1}(K(L))$, and $z \in X$ is such that $nz = y$, then 
$ \overline{e_n}(x,\phi_L(y)) = [x,z]_{G(n^*L)}\text{,}$ by \cite[p. 212]{Mum}.

We now prove that if $[\x,\y]_{\widehat{\G}(L)} = 1$, for all $\x, \y \in \V(X)$, then $\phi_L(y) = \Osh_X$ for all $y \in X$.
To accomplish this, we first make the following reduction.  
Since $\tor(X)$ is Zariski dense in $X$, to prove that $[\x,\y]_{\widehat{\G}(L)} = 1$, for all $\x, \y \in \V(X)$, implies that $\phi_L(y) = \Osh_X$ for all $y \in X$, it suffices to show that if $[\x,\y]_{\widehat{\G}(L)} = 1$, for all $\x, \y \in \V(X)$ then  $\phi_L(y) = \Osh_X$ for all $y \in \tor(X)$.  

To establish this reduction step, let $n$ be the order of $y$.  Then $y \in K(n_X^* L)$, and so $y \in n_X^{-1}(K(L))$.  Now choose $\z \in \V(X)$ such that $z_1 = y$.  Then $n z_n = y$ and also $n^2 z_n = n y = 0$ so $z_n \in K(n_X^*L)$.  Hence $n \in \supp^L(\z)$.

Let $x \in X_n$ and choose $\x \in \TT(X)$ with $x_n = x$.
We then have
\begin{equation}\label{Weil:eqn}
\overline{e_n}(x,\phi_L(y)) = [x,z_n]_{G(n^* L)} = [\x, \z]_{\widehat{\G}(L)} =1 \text{,}
\end{equation}
where the second rightmost equality follows because $n \in \supp^L(\x) \cap \supp^L(\z)$, and where the rightmost equality follows because $\widehat{\G}(L)$ is abelian.  

Since $x$ is an arbitrary element of $X_n$ the relation \eqref{Weil:eqn} holds for all $x \in X_n$.  Since $\overline{e_n}$ is non-degenerate this means that $\phi_L(y) = \Osh_X$ which is what we wanted to show.

The above implies that if $\widehat{\G}(L)$ is abelian then $L \in \Pic^0(X)$.
Indeed if $\widehat{\G}(L)$ is abelian then $[\x,\y]_{\widehat{\G}(L)} = 1$ for all $\x,\y \in \V(X)$.  This implies that $\phi_L(y) = \Osh_X$ for all $y \in X$.  Hence $L \in \Pic^0(X)$.

Conversely if $L \in \Pic^0(X)$ then $G(L)$ is abelian which implies that $\widehat{\G}(L)$ is abelian.

\noindent
Step 3.  The homomorphism $\widehat{\G} : \operatorname{NS}(X) \hookrightarrow \H^2(\V(X), k^\times)$ is functorial in $X$.

Let $f : X \rightarrow Y$ be a homomorphism of abelian varieties.  Using the definition of $\widehat{\G}(f)$ (see \S \ref{pullback}) we check that the diagram (which has exact rows)
$$\xymatrix{ 1 \ar[r] & \Pic^0(X) \ar[r] & \Pic(X) \ar[r]^-{\widehat{\G}} & \H^2(\V(X),k^\times) \\
1  \ar[r] & \Pic^0(Y) \ar[r] \ar[u]^-{f^*}& \Pic(Y) \ar[r]^-{\widehat{\G}} \ar[u]^-{f^*} & \ar[u]^-{f^*} \H^2(\V(Y), k^\times)
 }$$ commutes.
\end{proof}

\providecommand{\bysame}{\leavevmode\hbox to3em{\hrulefill}\thinspace}
\providecommand{\MR}{\relax\ifhmode\unskip\space\fi MR }
\providecommand{\MRhref}[2]{%
  \href{http://www.ams.org/mathscinet-getitem?mr=#1}{#2}
}
\providecommand{\href}[2]{#2}

\end{document}